\documentclass[12pt]{article}
\usepackage{amsmath, amsthm, amssymb, fullpage, amscd}
\usepackage[english]{babel}
\usepackage[utf8]{inputenc}
\usepackage{eucal}
\usepackage{thmtools}
\usepackage{float}
\usepackage{thm-restate}
\usepackage{tikz-cd, pgf, tikz}
\usepackage{geometry}
\usepackage{titlesec}
\usepackage{xcolor}
\usepackage{mathrsfs}
\usepackage{mathtools}
\usepackage[nottoc]{tocbibind}
\usepackage[noadjust]{cite}
\usepackage{graphicx}
\usepackage{caption}
\usepackage{subcaption}
\usepackage{bm, bbm}
\usepackage{mathrsfs}
\usepackage{array}
\usepackage{latexsym}
\usepackage{stmaryrd}
\usepackage{commath,enumitem,chngcntr}
\usepackage{indentfirst}
\usepackage[titletoc,toc,title]{appendix}
\usepackage{pst-node}
\usepackage{yhmath}
\usepackage{appendix}
\usepackage{algorithm}
\usepackage[noend]{algpseudocode}
\makeatletter
\renewcommand{\theHALG@line}{\thealgorithm.\arabic{ALG@line}}
\makeatother

\usepackage{float}
 \usepackage{overpic} 
\usepackage{faktor}

\usepackage[colorlinks, bookmarks=true, citecolor=blue, linkcolor=blue, urlcolor=blue]{hyperref}
\usepackage{cleveref}

\setlist{itemsep=3pt plus 1pt minus 1pt}

\usetikzlibrary{arrows,calc,positioning}

\newcommand{\dmo}{\DeclareMathOperator}
\dmo{\Id}{Id}
\dmo{\pt}{pt}
\dmo{\ab}{ab}
\dmo{\GL}{GL}
\dmo{\SO}{SO}
\dmo{\SU}{SU}
\dmo{\SL}{SL}
\dmo{\PSL}{PSL}
\dmo{\im}{Im}
\dmo{\re}{Re}
\dmo{\Ad}{Ad}
\dmo{\ad}{ad}
\dmo{\tr}{tr}
\dmo{\Log}{Log}
\dmo{\Homeo}{Homeo}
\dmo{\Vol}{Vol}
\dmo{\supp}{supp}
\dmo{\Isom}{Isom}
\dmo{\Hom}{Hom}
\dmo{\Ext}{Ext}
\dmo{\Tor}{Tor}
\dmo{\ind}{ind}
\dmo{\PP}{\mathbf{P}}

\newcommand{\C}{\mathbb C}

\newcommand{\D}{\mathbb D}

\newcommand{\Arg}{\mathrm {Arg}}
\renewcommand{\tr}{\mathrm {tr}}

\newtheorem{theorem}{Theorem}[section]
\newtheorem{corollary}[theorem]{Corollary}

\newtheorem{lemma}[theorem]{Lemma}

\theoremstyle{definition}

\theoremstyle{remark}

\def\noteson{\gdef\yanwen##1{\noindent{\color{blue}[Yanwen: ##1]}}
\gdef\yuan##1{\noindent{\color{violet}[Yuan: ##1]}}
\gdef\todo##1{\noindent{\color{red}[todo: ##1]}}}

\noteson

\usepackage{tikz-cd}
\usetikzlibrary{cd}

\makeatletter
\let\c@equation\c@theorem
\makeatother
\numberwithin{equation}{section}

\usepackage{lipsum}
\let\OLDthebibliography\thebibliography
\renewcommand\thebibliography[1]{
	\OLDthebibliography{#1}
	\setlength{\parskip}{2pt}
	\setlength{\itemsep}{0pt plus 0.3ex}
}

\usepackage{authblk}
\usepackage{hyperref}
\title{Morphing Graphs on Hyperbolic Surfaces}
\author{Yanwen Luo\footnote{University of Science and Technology Beijing, Beijing, 100083, P.R. China}, Yuan Luo\footnote{Department of Mathematics, University of California Davis, Davis, 95616, U.S}}
\date{July 2026}

\begin{document}

\maketitle
\begin{abstract}


We propose the first algorithm to morph geometric graphs on hyperbolic surfaces. It is based on a generalization of Tutte's spring embedding theorem on essentially $3$-vertex-connected graphs. We describe the algorithms in detail and show experiments with triangulations and graphs on a hyperbolic surface of genus two, the Bolza surface, and a hyperbolic surface of genus three, the Klein quartic.

\end{abstract}

\section{Introduction}

Constructing continuous deformations between two geometric shapes is a central problem in computational geometry. The goal of this paper is to introduce an algorithm with a solid mathematical foundation to compute a continuous deformation, or a morphing,  between two geometric graphs embedded in hyperbolic surfaces.

The graph morphing problem is concerned with continuously transforming one geometric embedding of a graph into another while preserving key properties such as planarity, non-self-intersection, and other geometric constraints. Starting from Cairns \cite{Cairns,Cairns2}, this problem in the Euclidean plane has been studied extensively for planar graphs \cite{BS,Ho1,Ho2,Bloch,BCH, Luo, Thomassen} from a topological perspective, especially on the connectedness of spaces of geometric embeddings. The theoretical results lead to various efficient algorithms \cite{planar, CGT, EKC, FG, GS}.
Here, a geometric embedding means that every edge of the graph is embedded as a straight line segment in the Euclidean plane, also called a straight-line embedding or a drawing. It produces a continuous family of straight-line embeddings parameterized by time, starting with an initial embedding and ending at another, with applications in surface parametrization and computer graphics. 

It is a natural question to extend the graph morphing problem on more complicated surfaces with non-trivial topology and non-Euclidean geometry. 
By the topological classification theorem of surfaces,  connected, closed, orientable surfaces are classified up to homeomorphism by their genus $g$. Geometrically, the uniformization theorem implies that such a surface can be equipped with metrics of constant curvature, whose sign is completely determined by its topology. Specifically, the unit $2$-sphere ($g=0$) in the Euclidean space has constant curvature $+1$, flat tori ($g = 1$)  have constant curvature $0$, and hyperbolic surfaces ($g\geq 2$) have constant curvature $-1$. A geometric embedding of a graph, or a geodesic embedding, on these surfaces refers to an embedding of the graph such that every edge is embedded as a geodesic segment with respect to metrics of constant curvature. It is determined by the position of each vertex of the graph and the homotopy class of each edge. A morphing between two geodesic embeddings on them is an isotopy, namely a continuous family of geodesic embeddings, between the given two embeddings.

\paragraph{Related work.} Compared with the massive literature on morphing graphs in the plane, the graph morphing problem on surfaces remains an active research subject with many open problems. It is curious that effective algorithms to morph graphs on the unit $2$-sphere remain unknown, given the fact that it is the most natural surface to consider, and morphing on the $2$-sphere is topologically equivalent but geometrically distinct from morphing planar graphs. Partial results provide morphing algorithms for special classes of geodesic embeddings along longitude \cite{AH, EH, KL}, or with angle structures \cite{LWZ2}. One of the main difficulties stems from the complication of positive curvature, such as large geodesic triangles or non-uniqueness of geodesic segments between two points on the $2$-sphere.

Graph morphing problem on flat tori has been solved recently in the work of \cite{CELP} and \cite{EL2021} from two distinct perspectives. The first follows Cairns' approach via direct deformations of vertices and collapses of edges, taking advantage of the combinatorial properties of toroidal triangulations. The second follows Floater-Gotsman's approach in \cite{FG} with generalizations of the classical Tutte's spring theorem \cite{Tutte} by lifting the geodesic embeddings of a triangulation to spaces of weights in Tutte's theorem as parameter spaces. Both algorithms have been shown to be efficient in terms of computational complexity.

\paragraph{Our work.} We describe an algorithm to morph two geodesic embeddings on hyperbolic surfaces following Floater-Gotsman's approach. It is computed in the universal covering of hyperbolic surfaces, the Poincar\'e disk $\D$. A generalization of Tutte's embedding theorem of triangulations on hyperbolic surfaces has been established in \cite{LWZ}. We further extend it to essentially $3$-connected graphs in the Appendix, which allows morphing between two convex geodesic embeddings of a cellular decomposition. It provides the theoretical foundation to show the existence of intermediate embeddings in the morphing. To our knowledge, this is a first algorithm to morph graphs on surfaces of genus at least two.


This paper is organized as follows. 
In Section 2, we collect mathematical backgrounds to describe the computation with the hyperbolic disk, Möbius transformations, and basic concepts related to the generalization of Tutte's embedding theorem on hyperbolic surfaces and mean value coordinates. We introduce the algorithm with details in Section 3 and 4 and experiments on Bolza surfaces with genus $3$ and Klein quadrics of genus $3$ with several triangulations and graphs in Section 5. In the Appendix, we prove the generalization of Tutte's embedding theorem with an index lemma similar to the work of \cite{GGT}. 

\paragraph{Future Work.}
Morphing geometric graphs on the unit $2$-sphere has been an open problem for a long time. An excellent summary and further research direction on it is given in Section 7 in \cite{EH}. It is also related to the problem to determine the homotopy type of spaces of geodesic triangulations on the unit $2$-sphere \cite{CHHS}. It is also natural to explore whether the current algorithm compute compute linear complexity morphs between geodesic embeddings.

\section{Background}
In this section, we summarize necessary mathematical facts about the Poincaré hyperbolic plane $\D$ and Tutte's embedding theorem on hyperbolic surfaces. 

\subsection{The Poincar\'e disk and M\"obius transformations}

The Poincaré disk model of the hyperbolic plane is the open unit disk $\D=\{z\in\C:|z|<1\},$
equipped with the metric
\[
ds^2=\frac{4|dz|^2}{(1-|z|^2)^2}
=\lambda_z^2|dz|^2,
\qquad
\lambda_z=\frac{2}{1-|z|^2}.
\]
Its geodesics are Euclidean diameters and circular arcs orthogonal to
$\partial\D$. The orientation-preserving hyperbolic isometries are the
M\"obius transformations preserving $\D$, which can be written as
\[
M(z)=\frac{az+b}{\overline{b}z+\overline{a}}, \qquad |a|^2-|b|^2=1.
\]
For $x,y\in\D$, their M\"obius addition is
\[
x \oplus y = \frac{x+y}{1+\overline{x}y},
\]
and their hyperbolic distance is
\[
d_{\D}(x,y) = 2 \operatorname{arctanh}\left|(-x)\oplus y\right|.
\]
The logarithm map $\log_x:\D\to T_x\D$ is
\[
\log_x(y) = \frac{2}{\lambda_x} \frac{\operatorname{arctanh}|\delta|}{|\delta|} \delta, \qquad \delta=(-x)\oplus y,
\]
with $\log_x(x)=0$. The exponential map $\exp_x:T_x\D\to\D$ is
\[
\exp_x(v) = x \oplus \left( \tanh\left(\frac{\lambda_x |v|}{2}\right) \frac{v}{|v|} \right),
\]
with $\exp_x(0)=x$.

Every closed surface of genus $g\geq 2$ admits a hyperbolic metric and can be represented as a quotient
\[
S = \mathbb{D}/\Gamma,
\]
where $\Gamma$ is a discrete group of M\"obius transformations preserving $\mathbb{D}$ acting freely on the Poincar\'e disk. Its fundamental domain is a geodesic polygon in $\mathbb{D}$ whose boundary edges are paired by elements of $\Gamma$ so that the quotient recovers the original surface. Working in a fundamental domain allows one to lift graphs from the surface to the hyperbolic plane, where explicit coordinates and formulas are available. In the rest of the paper, geodesic embeddings and deformations of them on a hyperbolic surface will be represented as a geodesic embedding in $\D$ equivariant with respect to $\Gamma$. 


\subsection{Tutte's embedding theorem for essentially 3-connected graphs}

Tutte's classical embedding theorem states that a 3-connected planar graph can be embedded without edge crossings by fixing the boundary vertices to a convex polygon and placing each interior vertex at the weighted average of its neighbors. This idea extends to triangulations on surfaces of arbitrary topology by replacing Euclidean line segments with geodesic segments and Euclidean convexity with geodesic convexity, including the case of flat tori in \cite{GGT} and the case of hyperbolic surfaces in  \cite{CdV, LWZ}. We generalize Tutte's embedding theorem to a broader family of essentially 3-connected graphs on hyperbolic surfaces, which is suitable for the graph morphing problem.

Assume $S$ is a closed orientable hyperbolic surface. 
A cellular decomposition $\mathcal{C}$ of a surface can be described by the sets of vertices $V$, edges $E$, and faces $F$, where each face is a $2$-cell and a marking homeomorphism $f:|\mathcal{C}|\to S$, where $|\mathcal{C}|$ is the geometric realization of $\mathcal{C}$. 
The one-skeleton of $\mathcal{C}$ is a graph $G = G(V, E)$. This graph might not be simple and may contain multiple edges and loops. A cellular decomposition is essentially 3-connected if the lift $\tilde G$ of $G$ to the universal covering is 3-connected. 

A map $\phi:G\to S$ is an embedding if it is continuous and injective when $G$ is regarded as a topological space and $\phi$ is homotopic to the marking $f$ restricted to $G$. A map $\phi:G\to S$ is a \textit{geodesic mapping} if it maps each edge of $G$ to a geodesic in $S$, where every edge of $G$ is identified with the unit interval $[0, 1]$ and the restriction of $\varphi$ on $[0, 1]$ is a geodesic parametrized with unit speed. 

A geodesic mapping $\phi$ is \textit{$w$-balanced} if there exists a weight function $w$ on directed edges  
such that for any $v_i\in V$ and $x=\phi(v_i)$, the tangent vectors $\vec{v}_{ij}$ of the image of edges $e_{ij}$ incident to $v_i$ satisfies in the tangent space $T_xS$

\begin{equation}
\label{eq:balanced-geodesic-mapping}
\sum_{e_{ij}\in N(v_i)}w_{ij}\vec{v}_{ij} = 0.
\end{equation}

Notice that the weight is, in general, not symmetric. A map $\varphi:G\to S$ is a \textit{geodesic embedding} if it is a geodesic mapping and an embedding. Let $X(S, G, \varphi)$ be the space of geodesic embeddings of $G$ on $S$ in the homotopy class determined by $\varphi$. We refer to \cite{LWZ} for more details about related notions. 

The following theorem provides the mathematical foundations for the algorithms to morph graphs on hyperbolic surfaces. 

\begin{theorem}
\label{tutte}
    Let $G$ be the one skeleton of an essentially 3-connected cellular decomposition $\mathcal{C}$ on a hyperbolic surface $S$. For any positive weight $w$ on directed edges of $G$, there exists a unique balanced geodesic mapping with respect to $w$. Moreover, this geodesic mapping is a geodesic embedding of $G$ on $S$ such that every face of $\mathcal{C}$ is embedded as a strictly convex polygon.
\end{theorem}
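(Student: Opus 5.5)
We split the statement into three parts: existence and uniqueness of the balanced geodesic mapping, the fact that it is an embedding, and strict convexity of the faces. We follow the strategy of \cite{LWZ} for triangulations and of \cite{GGT} for flat tori.

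\textbf{Existence and uniqueness.} Work equivariantly in $\D$. A geodesic mapping in the homotopy class of $\varphi$ is the same as a $\Gamma$-equivariant placement of the vertices of the lift $\tilde G$, with edges realized as the unique geodesics in $\D$. Since the weights are not symmetric, we cannot minimize an energy directly. Instead we use a degree (fixed point) argument on the configuration space $\D^{V}$, after choosing one lift per vertex.

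Consider the map $\Phi$ sending a configuration to the vector of weighted sums $\sum_j w_{ij}\log_{x_i}(x_j)$. First, we deform $w$ linearly to a symmetric weight $w^0$. For $w^0$ the equation is the critical point equation of the energy $\sum w^0_{ij} d(x_i,x_j)^2$. This energy is strictly convex along geodesic homotopies because hyperbolic space has negative curvature and $G$ carries non-abelian $\pi_1$ information; this is the argument of \cite{LWZ}. It is also proper, since each edge is homotopically nontrivial relative to the others. Hence the zero of $\Phi$ is unique and nondegenerate for $w^0$.

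Next, we show properness along the homotopy: balanced configurations stay in a compact set uniformly for weights bounded away from $0$. At a vertex of maximal displacement, balancing forces all neighbors to be displaced in the same direction, and connectivity then propagates this to a contradiction with the compactness of $S$. By degree theory, the solution set for general $w$ is nonempty.

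Uniqueness for nonsymmetric $w$ is the hardest part of this step. Following \cite{LWZ}, we compare two balanced maps $\phi,\psi$ via the function $f(v)=d(\tilde\phi(v),\tilde\psi(v))$, which is $\Gamma$-invariant. At a vertex where $f$ is maximal, the convexity of the distance function in $\D$ forces all neighbors to attain the maximum as well. Hence $f$ is constant, and the map is a translation-like equivariant map. This is impossible for a nonelementary $\Gamma$ unless $f\equiv 0$.

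\textbf{Embedding and convexity.} This is the main obstacle, and it is where essential $3$-connectivity enters. We adapt the discrete index/maximum principle of \cite{GGT}.

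First, we show no vertex is degenerate: the images of its neighbors do not lie in a closed half-plane at it. Balancing with positive weights forces the tangent vectors to positively span $T_x\D$ unless all lie on a geodesic line. In that case, a "half-plane argument" in $\tilde G$ applies. Take a geodesic $\ell$ in $\D$ and the set of vertices on one side. Following the discrete maximum principle with the Busemann-type or signed distance to $\ell$, these vertices induce a connected subgraph of $\tilde G$. If a vertex and its neighborhood collapse onto $\ell$, we produce at most two vertices separating $\tilde G$, contradicting $3$-connectedness.

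Second, we define for each face the index, that is, the total turning of its boundary polygon, and for each vertex the angle sum. Summing over a fundamental domain and using Gauss--Bonnet with area $-2\pi\chi(S)$, together with Euler's formula, we obtain equality in every inequality. So every face is a strictly convex polygon with positive orientation, and every vertex angle sum is exactly $2\pi$.

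Finally, local injectivity together with the correct total area gives a covering map of degree one from the cell complex to $S$. Hence the map is an embedding homotopic to $f$.

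We expect the degeneracy exclusion, namely translating collinear or half-plane configurations into separating pairs of $\tilde G$ in the hyperbolic equivariant setting, to be the delicate step.
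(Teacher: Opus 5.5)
\textbf{Gap: the half-plane lemma.} Your plan follows the classical planar Tutte route, lifted to $\tilde G$: first exclude collinear stars by a half-plane lemma and $3$-connectivity, then run the Gauss--Bonnet/index count and conclude by degree. It breaks at the step that carries all the weight. That step is the claim that the vertices of $\tilde G$ mapped into an open half-plane $P^+$ bounded by a geodesic $\ell$ induce a connected subgraph.

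In the planar proof this comes from the maximum principle \emph{together with the fixed convex outer face}: every monotone path ends at a boundary vertex, and the boundary vertices in $P^+$ form a single arc. Here $\tilde G$ has no boundary. The correct test function is $h=\sinh(\text{signed distance to }\ell)$, which is linear in the hyperboloid model. (The signed distance itself is neither convex nor concave, and a Busemann function produces horoballs, not half-planes.) Balance gives $\sum_j c_{ij}h(x_j)=h(x_i)\sum_j c_{ij}\cosh d_{ij}$ with $c_{ij}>0$. So from a vertex mapped into $P^+$ you only get a monotone path running off toward $\partial\mathbb{D}$. Nothing in this local argument prevents the vertices over $P^+$ from splitting into several components, each with its own escaping paths. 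Ruling that out needs a global argument, for instance matching ends of $\tilde G$ with arcs of $\partial\mathbb{D}$ via the bounded equivariant homotopy to the marking, and you do not give one.

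Even granting the lemma, $3$-connectivity alone does not turn a collinear star into a separating set of size at most two. That step needs planarity of $\tilde G$: a $K_{3,3}$ minor built from a maximal connected set $U$ of collinear stars, the two sides $P^\pm$, and three vertices outside $U$ adjacent to it.

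\textbf{Second step and the paper's route.} Your second step overclaims. Equality in the Gauss--Bonnet count does not by itself give strictly convex faces. It is compatible with zero-length edges, faces collapsed onto a segment, and corners of angle $\pi$. Excluding collinear stars also does not directly exclude a zero-length edge between two vertices whose stars do span.

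The paper proceeds in the opposite order and avoids any half-plane lemma:
\begin{itemize}
\item It runs the index argument first, with degenerate angle structures (Lemmas~\ref{oneface} and~\ref{index}).
\item It uses the resulting equalities to show that a class of coincident vertices is a path whose incident edges lie on one geodesic (Corollary~\ref{linedeg}), and that a corner of angle $\pi$ forces a collinear star (Lemma~\ref{singular}).
\item It gathers the collinear part into a subcomplex $\tilde{\mathcal H}$, after subdividing faces of the second type.
\item The angle count gives $\chi(\tilde{\mathcal H})=\tfrac12|E_B|$, which forces $\tilde{\mathcal H}$ to be a disk bounded by two edges. This contradicts property (3) of Theorem~\ref{3conn}.
\end{itemize}
Everything happens on the compact surface, and essential $3$-connectivity enters only through Mohar's characterization.

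\textbf{Existence and uniqueness.} The paper simply defers these to \cite{LWZ}. Your maximum-principle uniqueness argument is sound in outline once the flat-strip step is spelled out. The a priori bound needed for your degree argument is only gestured at.
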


One restricted version of the above theorem for triangulations has been proved in \cite{LWZ}, where the existence and uniqueness of the geodesic mapping was established based on the analysis of one-ring neighborhood of the lift in $\mathbb{D}$. Therefore, the argument can be directly generalized to essentially 3-connected cellular decompositions.
In the Appendix, we will prove that balanced geodesic mappings for essentially 3-connected graphs are geodesic embeddings such that every face is embedded as a convex hyperbolic polygon. We will implement a characterization of essentially 3-connected graphs on a surface with non-positive Euler characteristics by \cite{Mohar} and an index theorem for hyperbolic surfaces, generalizing the idea from \cite{GGT}.

\subsection{Hyperbolic mean value coordinates}
Hyperbolic mean value coordinates in \cite{hypermean} generalize classical mean value coordinates in \cite{Floater} from the Euclidean plane to the hyperbolic plane.
Given a convex geodesic polygon and a point in its interior, these coordinates assign positive weights to the vertices that sum to one and reproduce the point as a hyperbolic barycenter. They vary smoothly with the point and can be evaluated using explicit formulas involving hyperbolic distances and angles.
Hyperbolic mean value coordinates provide a flexible tool for interpolation, parameterization, and deformation of geometric data on hyperbolic surfaces. 

Let $P=(v_1,\dots,v_n)$ be a geodesically star-shaped polygon in the hyperbolic plane $\mathbb{H}^2$, and let $x$ be an eye of $P$, namely every geodesic segment from $x$ to a boundary point of $P$ is contained in the interior of $P$.
Denote by $d_i=d_{\mathbb{H}}(x,v_i)$ the hyperbolic distance from $x$ to the vertex $v_i$. Let $\alpha_i$ be the angle at $x$ between the geodesic segments
$[x,v_i]$ and $[x,v_{i+1}]$, where indices are taken modulo $n$. Define the
unnormalized weights
\[
\widetilde{w}_i(x)
=
\frac{
\tan(\alpha_{i-1}/2)+\tan(\alpha_i/2)
}{
\sinh d_i
}.
\]
The hyperbolic mean value coordinates are then given by
\[
w(x, v_i)
=
\frac{\widetilde{w}_i(x)}{\sum_{j=1}^n \widetilde{w}_j(x)}.
\]

These coordinates satisfy the following properties:
\begin{enumerate}
    \item Partition of unity: $w(x, v_i) > 0$ for all $i$, and $\sum_{i=1}^n w(x, v_i)=1.$


    \item Smoothness:
    Each $w(x, v_i)$ is smooth in the interior of the polygon.

    \item Isometry invariance:
    The coordinates are preserved under hyperbolic isometries.
\end{enumerate}

Note that the original definition of hyperbolic mean value coordinates is for points in a polygon $P$. In our algorithm to morph graphs, we compute the mean value coordinates in the star of every vertex $v_k$ with neighbors $v_i$. Then $w(v_k, v_i)$ associated with each ordered pair of $(v_k, v_i)$ can be viewed as a weight function evaluated on the directed edges of $G$.

\section{Methodology}
The basic framework to morph two graphs follows from the work in \cite{FG} and the proof of the contractibility of spaces of geodesic triangulations on surfaces in \cite{EL2021, Luo, LWZ}. \Cref{tutte} defines a continuous map $\Psi$ from the space of positive weights on directed edges to the space of geodesic embeddings $X(S, G, \varphi)$.
Hyperbolic mean value coordinates define a continuous map $\sigma$ from $X(S, G, \varphi)$ to the space of positive weights on directed edges. 
We will use both maps to generate a family of geodesic embeddings connecting two given geodesic embeddings.

\begin{figure}[htbp]
\centering
\begin{tikzpicture}[
    weight/.style={draw, rectangle, fill=red!20, minimum width=1.6cm,
        minimum height=0.8cm, align=center},
        embedding/.style={draw, rectangle, fill=blue!20, rounded corners,
        minimum width=1.8cm, minimum height=1cm, align=center},
    flow/.style={draw, -latex}
]
    \node[weight] (w0) {$w^0_{ij}$};
    \node[weight, right=1.3cm of w0] (w1) {$w^{t_1}_{ij}$};
    \node[right=0.7cm of w1] (wdots) {$\cdots$};
    \node[weight, right=0.7cm of wdots] (wnm) {$w^{t_{N-1}}_{ij}$};
    \node[weight, right=1.3cm of wnm] (wn) {$w^1_{ij}$};

    \node[embedding, below=2cm of w0] (tau0) {$\tau_0$};
    \node[embedding] (tau1) at (w1 |- tau0) {$\Psi(w^{t_1}_{ij})$};
    \node (taudots) at (wdots |- tau0) {$\cdots$};
    \node[embedding] (taunm) at (wnm |- tau0) {$\Psi(w^{t_{N-1}}_{ij})$};
    \node[embedding] (taun) at (wn |- tau0) {$\tau_1$};

    \node[above=0.55cm of wdots] (interpolation)
        {$w^{t_k}_{ij}=(1-t_k)w^0_{ij}+t_kw^1_{ij},\quad t_k=k/N$};

    \path[flow] (w0) -- (w1);
    \path[flow] (w1) -- (wdots);
    \path[flow] (wdots) -- (wnm);
    \path[flow] (wnm) -- (wn);
    \path[flow] (tau0) -- (tau1);
    \path[flow] (tau1) -- (taudots);
    \path[flow] (taudots) -- (taunm);
    \path[flow] (taunm) -- (taun);

    \path[flow] (tau0) -- node[left] {$\sigma$} (w0);
    \path[flow] (taun) -- node[right] {$\sigma$} (wn);
    \path[flow] (w1) -- node[right] {$\Psi$} (tau1);
    \path[flow] (wnm) -- node[left] {$\Psi$} (taunm);
\end{tikzpicture}
\caption{The framework in \cite{FG} evaluated at the discrete time
$0=t_0<t_1<\cdots<t_N=1$.}
\end{figure}
 
Let $\tau_0$ and $\tau_1$ be two geodesic embeddings in $X(S, G, \varphi)$. We find their hyperbolic mean value coordinates through $\sigma$, $w^0_{ij} = \sigma(\tau_0)$ and $w^1_{ij} = \sigma(\tau_1)$. Since the space of weights is convex, a linear interpolation $w^t_{ij} = (1-t)w^0_{ij} + tw^1_{ij}$ produces a family of weights. Then $\Psi(w^t_{ij})$ is a family of geodesic embeddings connecting $\tau_0$ and $\tau_1$. 

\section{Algorithm}
The input of our algorithm consists of two geodesic embeddings of $G$ on $S$, represented in a fundamental polygon of $S$ in $\mathbb{D}$ by two vertex position matrices $Z^0$ and $Z^1$, respectively, of the same size. 

We also store the side-pairing Möbius transformations to encode the side identifications of the cut-open fundamental polygon. Note that they share the same Möbius transformations.
For vertices attached by Möbius transformations, we choose one copy as the representative vertex and call it the \emph{root}; the remaining attached copies are called \emph{slave} vertices. More explicitly, assume a vertex $v_i$ has $m_i$ copies attached by Möbius transformations, and we choose $z_i^1$ as the root copy and $z_i^\ell$ is a slave copy, then the attaching map $M_\ell$ satisfies
$$
z_i^\ell = M_\ell(z_i^1), \qquad \ell=2,\ldots,m_i .
$$
Thus root and slave copies are both stored in $Z^0$ and $Z^1$, but only the root is an independent position variable. An interior root has no slave copies ($m_i=1$). A boundary root may have one or more slave copies attached by Möbius transformations. We will use $v_i$ and $z_i$ interchangeably to denote a vertex in our algorithm when the context is clear.

Our algorithm consists of the following three steps:
\begin{enumerate}
    \item Given the two vertex position matrices $Z^0$ and $Z^1$ and the side-pairing Möbius transformations, compute two directed edge weights $w^0_{ij}$ and $w_{ij}^1$ on them respectively by the hyperbolic mean value coordinates. 
    \item Linearly interpolate the two directed edge weights to obtain a family of directed edge weights $w^t_{ij} = (1-t)w^0_{ij} + tw^1_{ij}$ at each time step $t$.
    \item For each $w^t_{ij}$, compute the $w^t_{ij}$-balanced embedding by a gradient descent algorithm computed on the hyperbolic plane $\mathbb{D}$. 
\end{enumerate}

We now discuss them in details.

\paragraph{Hyperbolic convexity of faces.}
For two inputs of geodesic embeddings of cellular decomposition, the first step of our algorithm requires that higher-order faces, namely faces with more than three edges, of the inputs must be strictly geodesically convex. 
We call them \textit{convex embeddings}. This is checked with tangent directions of Poincaré geodesics each face. 

We first find the unit tangent vector for the geodesic between two adjacent vertices $p=(x_p,y_p)$ and $q=(x_q,y_q)$. If $x_p y_q-y_p x_q=0$, the geodesic is a Euclidean diameter and the tangent at $p$ toward $q$ is
\[
\tau_{p\to q} = \frac{q-p}{\|q-p\|}.
\]
Otherwise the geodesic is the Euclidean circle orthogonal to $\partial\D$ and passing through $p$ and $q$. Its center $c=(c_x,c_y)$ is found by solving
\[
2c\cdot p=\|p\|^2+1,\qquad 2c\cdot q=\|q\|^2+1,
\]
equivalently, for $\Delta=x_p y_q-y_p x_q$,
\[
c_x = \frac{(\|p\|^2+1)y_q-(\|q\|^2+1)y_p}{2\Delta}, \qquad c_y = \frac{x_p(\|q\|^2+1)-x_q(\|p\|^2+1)}{2\Delta}.
\]
The tangent line is perpendicular to the radius $p-c$, so the two possible unit tangents are
\[
\pm\frac{(-(y_p-c_y),\,x_p-c_x)}{\|p-c\|}.
\]
We choose the sign for which $p+\epsilon\tau_{p\to q}$ is closer to $q$ than $p-\epsilon\tau_{p\to q}$ for a small $\epsilon>0$. 

For a face $(v_0,\ldots,v_{\ell-1})$, define the two consecutive unit tangent vectors $a_k=\tau_{v_{k-1}\to v_{k}},\\ b_k=\tau_{v_k\to v_{k+1}}$.
The signed turning angle at $v_k$ is the oriented angle from $a_k$ to $b_k$:
\[
\theta_k = \Arg\!\left( \left\langle a_k,b_k\right\rangle + \sqrt{-1}\,\det(a_k,b_k) \right)\in(-\pi,\pi].
\]
Here, $\left\langle u,v\right\rangle = \operatorname{Re}(u\overline{v})$ and $\det(u, v) = \operatorname{Im}(\bar uv)$. The face is accepted as geodesically convex when all $\theta_k>0$ have the sign of the face orientation.

In practice, a simple method to generate convex embeddings is provided in Section 5 using discrete harmonic maps.

\paragraph{Directed edge weights.}
Given a convex embedding, we compute the weights on directed edges of $G$. Around a root vertex $z_i$, all neighbors are first moved into the same local star using the side-pairing Möbius transformations. Let $z_i$ denote its neighbors. Next, we move the star by a disk isometry sending $z_i$ to the origin. 

For each neighbor occurrence $v_j$, let
\begin{equation}
\label{eq:local-star-coordinate}
u_j=\frac{z_j-z_i}{1-\overline{z_i}z_j} = r_j e^{\sqrt{-1}\varphi_j},
\qquad \varphi_j=\Arg(u_j)\in(-\pi,\pi],
\qquad \rho_j=2\operatorname{arctanh} r_j.
\end{equation}

After grouping neighbors with the same angle, 
write the distinct angles in increasing order as
\[
-\pi<\varphi_0<\varphi_1<\cdots<\varphi_{q-1}\leq\pi.
\]
This gives their cyclic order around $v_i$. 
To include the gap across the branch cut, set
\[
\varphi_{-1}:=\varphi_{q-1}-2\pi,
\qquad
\varphi_q:=\varphi_0+2\pi.
\]
For a neighbor $v_j$ whose direction is the $k$-th distinct angle,
the adjacent angular gaps before and after neighbor $v_j$ are
\[
\alpha_j^{-}:=\varphi_k-\varphi_{k-1},
\qquad
\alpha_j^{+}:=\varphi_{k+1}-\varphi_k,
\qquad
0<\alpha_j^{-}<\pi,
\quad
0<\alpha_j^{+}<\pi,
\]
where the final inequalities follow from strict convexity of the incident faces.
The unnormalized hyperbolic mean value directed weights are
\begin{equation}
\widetilde{w}_{ij} = \frac{ \tan(\alpha_j^{-}/2)+\tan(\alpha_j^{+}/2) }{ \sinh \rho_j }.
\end{equation}
When multiple attached occurrences have the same angular direction, the numerator is shared equally among them. We then normalize the outgoing weights at each center vertex,
\begin{equation}
\label{eq:directed-mean value-weight}
{w}_{ij}=\frac{\widetilde{w}_{ij}}{\sum_{k\sim i}\widetilde{w}_{ik}},
\end{equation}
so the directed coefficients satisfy $\sum_{j\sim i}{w}_{ij}=1$, where $j\sim i$ means $j$ is adjacent to $i$.
\begin{algorithm}[htbp]
\caption{Directed mean value weights on the attached local star}
\label{alg:directed-mean-value-weights}
\begin{algorithmic}[1]
\State \textbf{Input:} Graph $G=(V,E)$, position array $Z$, side-pairing maps
\State \textbf{Output:} Directed weights ${w}_{ij}$ for all oriented edge incidences
\ForAll{root vertices $v_i\in V$}
    \State Build the attached local star of $v_i$ using $Z$ and the side-pairing maps
    \ForAll{attached neighbor occurrences $v_j$}
        \State Compute $u_j,\phi_j,\rho_j$ by Eq.~\eqref{eq:local-star-coordinate}
    \EndFor
    \State Sort by $\phi_j$ and compute the adjacent angles $\alpha_j^{-},\alpha_j^{+}$
    \State Compute the normalized weights: ${w}_{ij}$ by Eq.~\eqref{eq:directed-mean value-weight}
\EndFor
\State \Return all directed weights ${w}_{ij}$ and ${w}_{ji}$
\end{algorithmic}
\end{algorithm}

\paragraph{Iterative update of vertices via gradients.}
Given a positive weight $w$ on directed edges and an initial embedding represented by $Z$, we find a $w$-balanced geodesic embedding by a local gradient descent algorithm as follows.

\textbf{Case 1}: For a root vertex $v_i$ without slave copies, consider the function 
\begin{equation}
\label{eq:local-convex-function}
F_i(y)=\sum_{j\sim i}{w}_{ij}\bigl(\cosh d_{\D}(y,z_j)-1\bigr).
\end{equation}

Its gradient at $z_i$ is
\begin{equation}
\label{eq:directed-edge-gradient}
\nabla F_{i} (z_i)
=-\sum_{j\sim i}{w}_{ij}
\frac{\sinh d_{ij}}{d_{ij}}\log_{z_i}(z_j)
:=g_i \in T_{z_i}\D,
\qquad d_{ij}=d_{\D}(z_i,z_j),
\end{equation}
The root is then updated iteratively by $z_i\gets\exp_{z_i}(-\eta g_i)$ for a step size $\eta$.

Eq.~\eqref{eq:directed-edge-gradient} provides a local gradient for each vertex and we use gradient descent to find the $w$-balanced embedding. Indeed, $\log_{z_i}(z_j)=d_{ij}\vec e_{ij}$ with $\vec e_{ij}$ is the unit tangent vector at $z_i$ pointing to $z_j$ , so $\nabla F_i(z_i)=0$ is equivalent to the $w$-balance condition in Eq.~\eqref{eq:balanced-geodesic-mapping}.
Notice that $F_i$ is defined locally for each vertex $i$ and its star, and in general $w_{ij} \neq w_{ji}$. 

\textbf{Case 2}: For a root vertex $v_i$ with $m$ slave copies, the goal is to update the root copy $z_i^1$ and its slave copies $z_i^2,\ldots,z_i^m$ respecting the constraints $z_i^\ell=M_\ell(z_i^1)$.

Assume we have computed gradient vectors $g_i^\ell\in T_{z_i^\ell}\D$ by Eq.~\eqref{eq:directed-edge-gradient} on all slave vertices $z_i^l$. We need to pull back thees gradients to the root vertex. The accumulated gradient $G_i$ on the root vertex can be found by the sum of pull-back gradients
\[
G_i=g_i^1+\sum_{\ell=2}^{m}\widetilde g_i^\ell,
\qquad
\widetilde g_i^\ell=d(M_\ell^{-1})_{z_i^\ell}\bigl(g_i^\ell\bigr),
\]
where the differential $d(M_\ell^{-1})_{z_i^\ell}$ is computed below. We update the root by
\[
z_i^1\gets\exp_{z_i^1}(-\eta G_i),
\]
and then reset each slave by $z_i^\ell\gets M_\ell(z_i^1)$.

For a root $z_r$ and one of its slave 
$$
z_s=M(z_r) = \frac{az_r+b}{\overline{b}z_r+\overline{a}},
$$
let $g_s\in T_{z_s}\D$ be the gradient contribution at the slave copy. The corresponding contribution at the root is the metric adjoint
\[
g_{r\leftarrow s} = (dM_{z_r})^{*}g_s.
\]
Because $M$ is a Möbius transformation, this simplifies to the inverse differential,
\[
(dM_{z_r})^{*}g_s = d(M^{-1})_{z_s}(g_s) = (M^{-1})'(z_s)\,g_s.
\]
Computationally, for $M(z)=(az+b)/(\overline{b}z+\overline{a})$, the inverse is
\[
M^{-1}(z)=\frac{\overline{a}z-b}{-\overline{b}z+a},
\]
and its derivative is
\[
(M^{-1})'(z)=\frac{1}{(-\overline{b}z+a)^2}.
\]
Thus the pulled-back gradient contribution used in the implementation is
\begin{equation}
\label{eq:pullback-gradient}
g_{r\leftarrow s} = d(M^{-1})(z_s)g_s = \frac{g_s}{(-\overline{b}z_s+a)^2}.
\end{equation}

\begin{algorithm}[htbp]
\caption{Gradient accumulation and update at one root}
\label{alg:root-slave-gradient-accumulation}
\begin{algorithmic}[1]
\State \textbf{Input:} Root copy $z_i^1$, slave copies $z_i^2,\ldots,z_i^m$, attaching maps $M_\ell$ satisfying $z_i^\ell=M_\ell(z_i^1)$, directed weights $\omega$, and step size $\eta$
\State \textbf{Output:} Updated root and slave copies for vertex $i$
\For{$\ell=2,\ldots,m$}
    \State Enforce the slave constraint: $z_i^\ell\gets M_\ell(z_i^1)$
\EndFor
\State $G_i\gets 0$
\For{$\ell=1,\ldots,m$}
    \State Compute $g_i^\ell$ at copy $z_i^\ell$ from Eq.~\eqref{eq:directed-edge-gradient}
    \If{$\ell=1$}
        \State $G_i\gets G_i+g_i^\ell$ \Comment{root-copy contribution}
    \Else
        \State Pull back the slave contribution by Eq.~\eqref{eq:pullback-gradient}: $\widetilde g_i^\ell\gets d(M_\ell^{-1})_{z_i^\ell}(g_i^\ell)$
        \State $G_i\gets G_i+\widetilde g_i^\ell$
    \EndIf
\EndFor
\State Update the root: $z_i^1\gets \exp_{z_i^1}(-\eta G_i)$
\For{$\ell=2,\ldots,m$}
    \State Enforce the slave constraint: $z_i^\ell\gets M_\ell(z_i^1)$
\EndFor
\State \Return updated copies $z_i^1,\ldots,z_i^m$
\end{algorithmic}
\end{algorithm}

\paragraph{Morphing algorithm.}
Our algorithm produces $N$ frames from the initial positions $Z^0$ to the final positions $Z^1$. For $t_k=k/(N-1)$, $k=0,\ldots,N-1$, we linearly interpolate the directed weights,
\[
w_{ij}^{t_k} = (1-t_k)\,w_{ij}^{0} + t_k\,w_{ij}^{1},
\]
where $w_{ij}^{0}, w_{ij}^{1}$ are the directed edge weights computed on $Z^0$ and $Z^1$ by \Cref{alg:directed-mean-value-weights}.
Each intermediate frame $Z^{t_k}$, is obtained by minimizing the local functions in \Cref{eq:local-convex-function} for $w^{t_k}$, initialized from $Z^{t_{k-1}}$. Algorithm~\ref{alg:root-slave-gradient-accumulation} updates every root and resets its slave copies using the attaching maps, thereby preserving the attaching constraints. In our experiments, the iteration stops when the maximum vertex change in one update is below $\varepsilon = 10^{-8}$, or when the iteration budget $K_{\max} = 10k$ is reached.

\begin{algorithm}[H]
\caption{Mean value coordinate morphing}
\label{alg:mean value-morph}
\small
\begin{algorithmic}[1]
\State \textbf{Input:} Graph $G=(V,E)$, endpoints $Z^0,Z^1$, frame count $N$, step size $\eta$, iteration budget $K_{\max}$, and tolerance $\varepsilon$
\State \textbf{Output:} Position frames $Z^{t_k}, k = 0, \ldots, N-1$
\State Compute directed weights $\omega^0$ from $Z^0$ by Algorithm~\ref{alg:directed-mean-value-weights}
\State Compute directed weights $\omega^1$ from $Z^1$ by Algorithm~\ref{alg:directed-mean-value-weights}
\State $Z^{t_0}\gets Z^0$ and $Z^{t_{N-1}}\gets Z^1$
\For{$k=1,\ldots,N-2$}
    \State $t_k\gets k/(N-1)$
    \State $\omega(t_k)\gets(1-t_k)\omega^0+t_k\omega^1$
    \State Initialize the current frame with $Z\gets Z^{t_{k-1}}$
    \For{$s=1,\ldots,K_{\max}$}
        \State $Z_{\mathrm{old}}\gets Z$
        \ForAll{root vertices $i$}
            \State Apply Algorithm~\ref{alg:root-slave-gradient-accumulation} to the root copy and all slave copies of $i$ using $\omega(t_k)$
            \State Store the returned root and slave positions in $Z$
        \EndFor
        \State $\delta\gets\max_i d_{\D}(Z_i,Z_{\mathrm{old},i})$
        \If{$\delta<\varepsilon$} \State \textbf{break} \EndIf
    \EndFor
    \State Store $Z^{t_k}\gets Z$
\EndFor
\State \Return $Z^{t_k}, k = 0, \ldots, N-1$
\end{algorithmic}
\end{algorithm}

\section{Experiment}
Our implementation is publicly available online.\footnote{\url{https://github.com/YuanL12/hyperbolic-shape-morphing}}
All experiments were run on a MacBook Pro with an Apple M4 Pro chip and $48$\,GB of memory.
We test triangulated and cellular graphs with quadrilateral and pentagonal faces on two cut-open surfaces: the genus--2 Bolza surface, represented by a regular octagonal fundamental domain, and the genus--3 Klein quartic, represented by a regular $14$-gon. The combinatorial sizes of the four test cases are listed in Table~\ref{tab:morph-topology-counts}. We next describe the generation of their synthetic endpoint embeddings; in applications, these endpoints may instead come from prescribed data.

\begin{table}[H]
\centering
\caption{Topology of the four morphing experiments.}
\label{tab:morph-topology-counts}
\begin{tabular}{lrrrrrr}
\hline
Example & Genus & Vertices & Edges & Triangles & Quads & Pentagons \\
\hline
Triangulation & 2 & 190 & 527 & 338 & 0 & 0 \\
Cells & 2 & 190 & 490 & 270 & 25 & 6 \\
Triangulation & 3 & 250 & 677 & 428 & 0 & 0 \\
Cells & 3 & 250 & 557 & 208 & 80 & 20 \\
\hline
\end{tabular}
\end{table}

\paragraph{Generating two convex embeddings $Z^0$ and $Z^1$.}
We construct convex embeddings by discrete harmonic maps as minimizers of the discrete Dirichlet energy \cite{CdV, Gaster, Lam, ZLL}. The discrete Dirichlet energy is determined by a symmetric (or undirected) weight $w_{ij}$,
$$
E=\frac12\sum_{\{i,j\}\in E}w_{ij}\,l_{ij}^2,
$$
where $l_{ij}$ is the length of the edge. 
In the universal cover $\D$, we write a vertex position as a complex number $z_i \in \D$. For undirected edge weights $w_{ij} = w_{ji}$, the hyperbolic Dirichlet energy is 
\begin{equation}
\label{scalar-hyperbolic-dirichlet-energy}
E=\frac12\sum_{\{i,j\}\in E} w_{ij}\,d_{\D}(z_i,z_j)^2.
\end{equation}
It is minimized by the same gradient descent framework.
It has been shown in \cite{CdV} that there exists a unique minimizer producing a convex embedding of the graph on surface as discrete harmonic maps  \cite{CdV, Lam, ZLL}.

For each of four graphs, we compute two convex embeddings by minimizing the scalar hyperbolic Dirichlet energy in Eq.~\eqref{scalar-hyperbolic-dirichlet-energy}. The initial convex embedding uses undirected edge weights $w_e \equiv 1$. 
The final convex embedding uses a non-uniform weight in which the first half of the edges have weight $10$ and the rest have weight $1$. The resulting minimizers $Z^0$ and $Z^1$ are the morph endpoints. Afterward, the undirected weights are discarded and the morphing algorithm computes directed mean value weights from the two convex embeddings.


All four morphs were generated by $N = 25$ frames, with selected frames shown in \Cref{fig:g2-triangulation-morph,fig:g2-cells-morph,fig:g3-triangulation-morph,fig:g3-cells-morph}. Green curves are graph edges, the solid black curve is the fixed fundamental domain as a reference. We use label the boundary vertices by red to highlight the change of vertex positions. We also use orange and green faces to indicate quadrilateral and pentagonal faces respectively.

Across all four examples, every generated frame remained embedded in the Poincaré disk. No frame contained crossing of geodesic edges. For the two cellular morphs, every displayed and generated frame also passed the convexity check on all quadrilateral and pentagonal faces. These experiments indicate that the algorithm to morph graphs applies to triangulations and cellular decompositions with convex higher-order faces, on both the genus--2 Bolza surface and the genus--3 Klein quartic.

\textbf{Acknowledgment.} This work is a collaboration during the workshop ``Foundations of Computational Geometry and Topology'' at ICERM, Brown University in May 18-21, 2026. 

\begin{figure}[H]
\centering
\includegraphics[width=\linewidth]{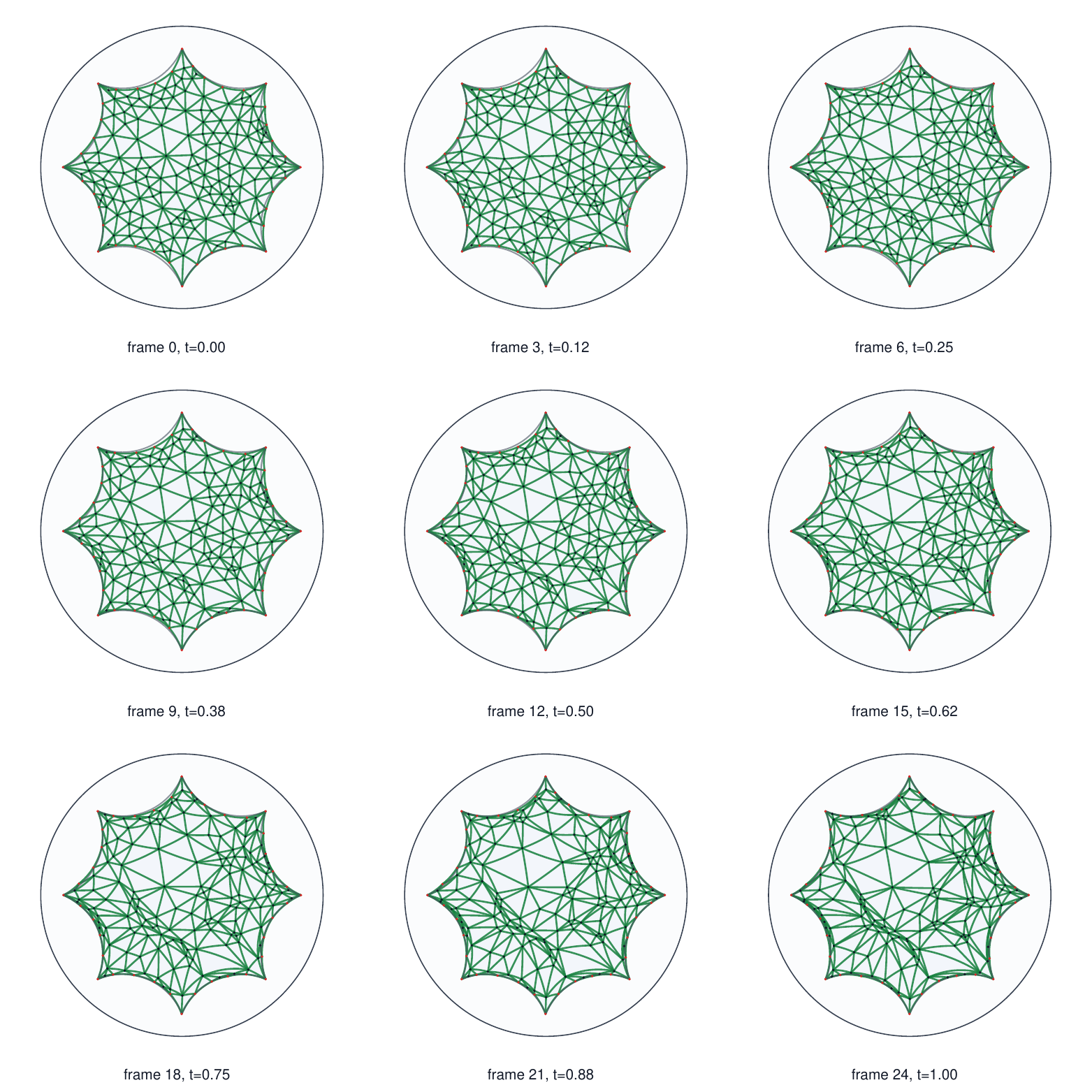}
\caption{Genus--2 triangulation morph. It has 190 vertices, 527 edges, and 338 triangular faces. The frames show the normalized mean value coordinate morphing from the uniform initial embedding to the non-uniform final embedding.}
\label{fig:g2-triangulation-morph}
\end{figure}

\begin{figure}[H]
\centering
\includegraphics[width=\linewidth]{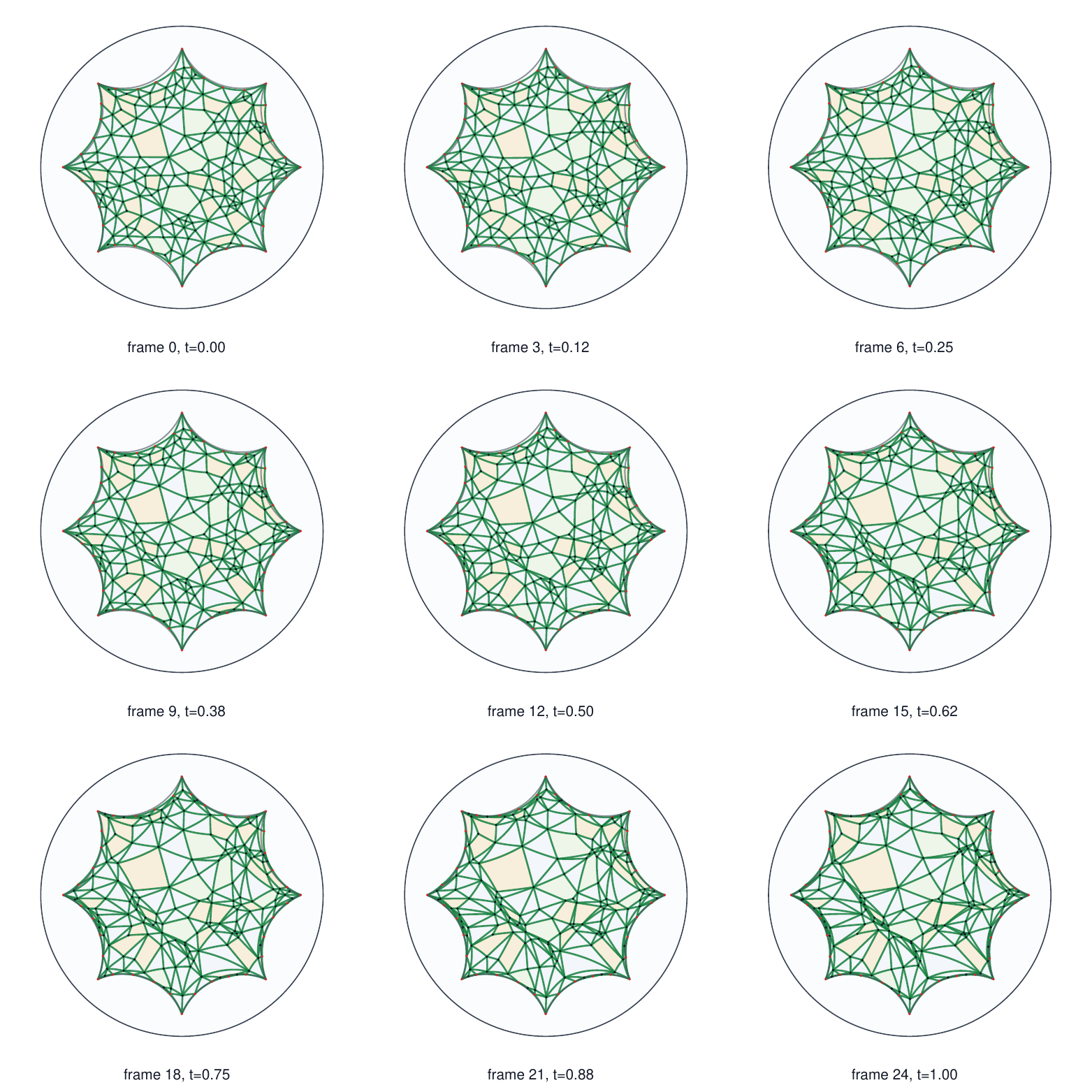}
\caption{Genus--2 cellular morph. It has 190 vertices, 490 edges, 270 triangles, 25 quadrilaterals, and 6 pentagons. The displayed normalized mean value frames preserve geodesic convexity for all quadrilateral and pentagonal cells.}
\label{fig:g2-cells-morph}
\end{figure}

\begin{figure}[H]
\centering
\includegraphics[width=\linewidth]{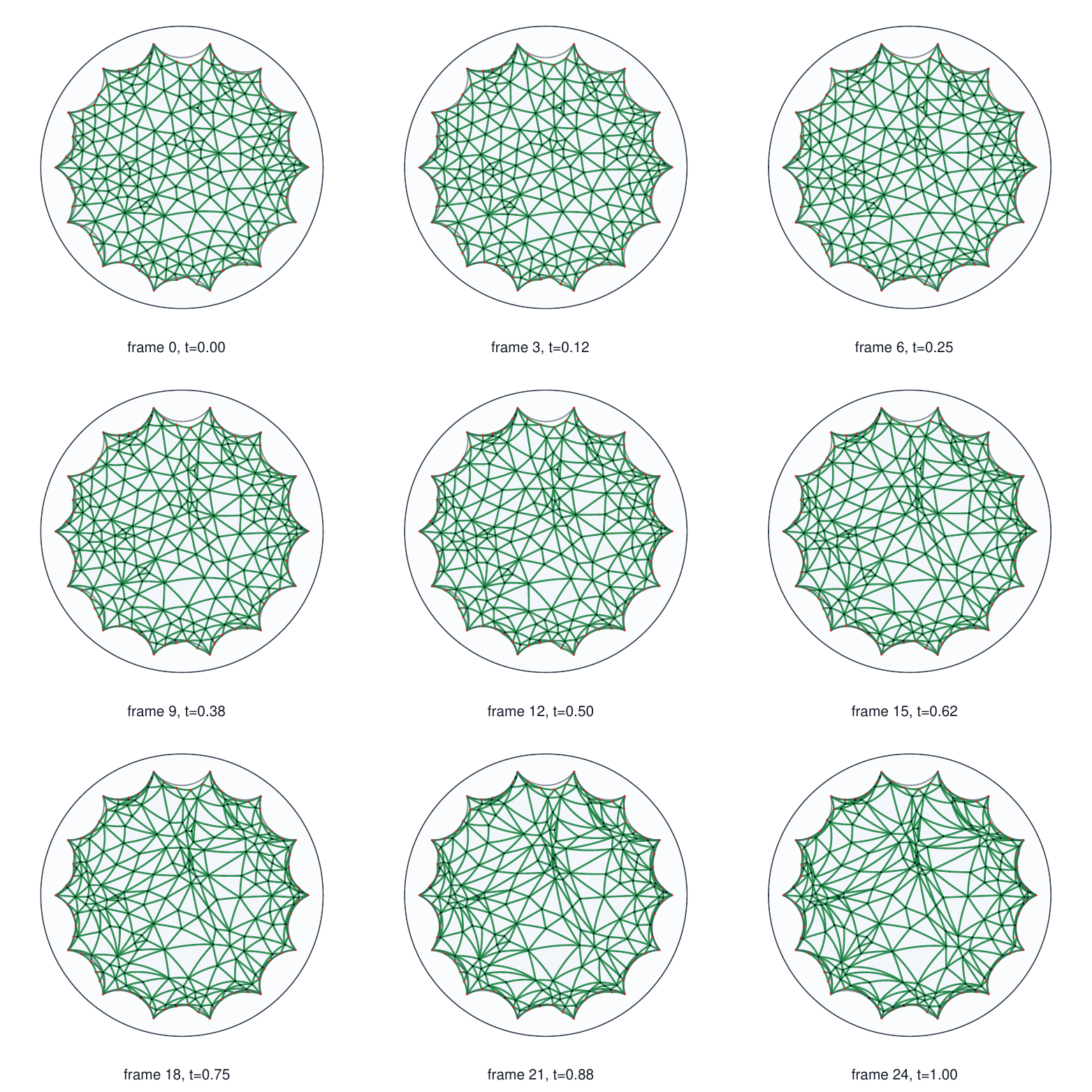}
\caption{Genus--3 triangulation morph on the Klein quartic fundamental domain. It has 250 vertices, 677 edges, and 428 triangular faces. The frames show the normalized mean value coordinate morph from the uniform initial embedding to the non-uniform final embedding.}
\label{fig:g3-triangulation-morph}
\end{figure}

\begin{figure}[H]
\centering
\includegraphics[width=\linewidth]{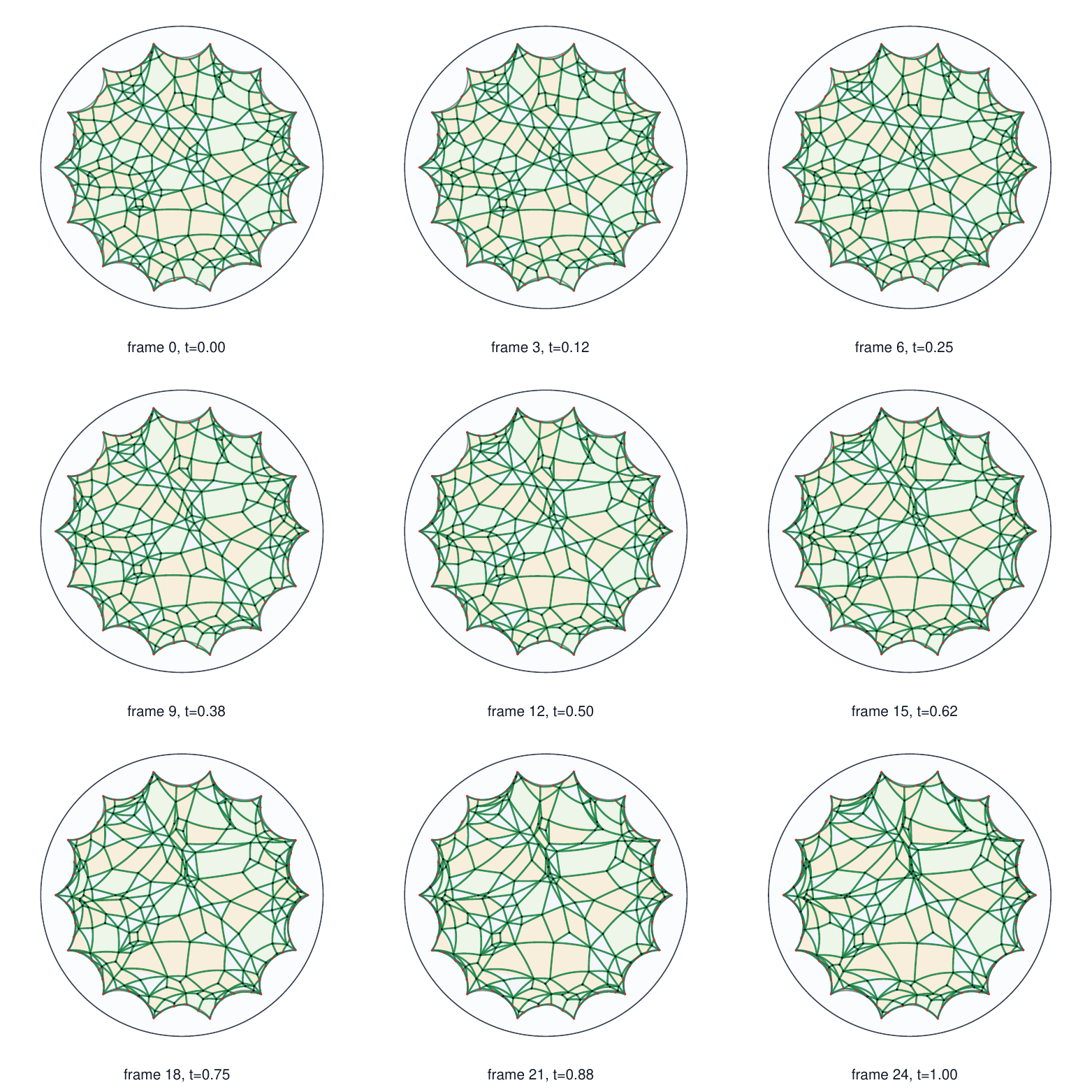}
\caption{Genus--3 cellular morph. It has 250 vertices, 557 edges, 208 triangles, 80 quadrilaterals, and 20 pentagons. As in the genus--2 cellular example, normalized mean value weights are computed from the local stars, and the displayed frames preserve geodesic convexity for all higher-order cells.}
\label{fig:g3-cells-morph}
\end{figure}

\newpage
\section{Appendix: Theorem \ref{tutte} for essentially 3-connected graphs}

The goal of this appendix is to prove a slight generalization of Theorem 1.6 in \cite{LWZ}.
\begin{theorem}
\label{main}
    For every essentially $3$-connected cellular decomposition on a closed hyperbolic surface $S$, a balanced geodesic mapping from the one skeleton of the cellular decomposition to $S$ is an embedding such that each $2$-cell is embedded as a strictly convex hyperbolic polygon. 
\end{theorem}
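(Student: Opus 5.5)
The plan is to follow the index-theoretic strategy of \cite{GGT}, transplanted to the hyperbolic setting, with the combinatorial input supplied by Mohar's characterization \cite{Mohar}. Let $\varphi$ be a $w$-balanced geodesic mapping and lift it to a $\Gamma$-equivariant geodesic mapping $\tilde\varphi:\tilde G\to\D$. All arguments are local statements in $\D$ that descend to $S$ by equivariance.

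\textbf{Step 1: local non-degeneracy.} Balancing at $v_i$ says that $\sum_j w_{ij}\vec v_{ij}=0$ in $T_{\varphi(v_i)}S$ with all $w_{ij}>0$. Hence either all incident tangent vectors are zero (the whole star collapses), or they are not contained in any closed open-half-plane. First I will rule out collapse. Suppose a connected set $U$ of vertices of $\tilde G$ is mapped to a single point $p$. Consider the neighbors of $U$ that are not mapped to $p$. Using balancing and convexity of the function $\cosh d_{\D}(\cdot,\cdot)$ (a maximum principle in the direction of a geodesic through $p$), every vertex of $U$ adjacent to such a neighbor must see neighbors on both sides of any line through $p$. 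Then a Mohar-type separation argument shows that either $U$ is everything, which is impossible since an equivariant constant map cannot be balanced with nontrivial holonomy, or $U$ is separated from the rest of $\tilde G$ by at most two vertices, contradicting essential 3-connectivity. For the same reason, no face collapses into a geodesic line: the vertices of a face mapped onto a common geodesic give, via the hyperbolic maximum principle along the orthogonal direction, a 2-cut in $\tilde G$.

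\textbf{Step 2: index count.} For each vertex define the index as $\frac{1}{2\pi}$ times the total turning of the images of the incident edges in cyclic order minus one, with the analogous quantity for faces. Balancing plus Step 1 shows that every vertex index is $\le 0$: consecutive edge directions span angles less than $\pi$, so the winding is at least $1$. Every face index is also $\le 0$, by Gauss--Bonnet in $\D$ applied to the image polygon. Here I expect that each face has the winding of a positively oriented geodesic polygon, so the sum of its turning angles, $2\pi$ minus its area, is strictly positive. I will then establish the index theorem
\[
\sum_{v}\ind(v)+\sum_{e}\ind(e)+\sum_{f}\ind(f)=\chi(S)\cdot(\text{correction}),
\]
whose total is $0$ after accounting for hyperbolic area. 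Concretely, the sum of interior angles over all faces equals $2\pi|V|$ only if $\varphi$ is locally injective, and the total area equals $-2\pi\chi(S)=\mathrm{Area}(S)$. Comparing the two forces every index to be zero.

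\textbf{Step 3: conclusion.} Zero vertex index means the image star winds exactly once around each vertex, and zero face index means each face is a strictly convex polygon; strictness follows from Step 1, since consecutive angles are $<\pi$. Then $\tilde\varphi$ is a local homeomorphism of the cell complex onto $\D$ that is proper and equivariant, hence a covering of the simply connected $\D$, hence a homeomorphism. Therefore $\varphi$ is an embedding homotopic to the marking.

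\textbf{Main obstacle.} I expect the hardest step to be Step 1's degenerate cases in the hyperbolic setting. Euclidean linear-algebra arguments have to be replaced by convexity of hyperbolic distance functions, and the separation argument must correctly handle non-simple $G$, with loops and multi-edges whose lifts are distinct. The first thing I would try is projecting along a geodesic and using the fact that Busemann-type functions are convex along geodesics.
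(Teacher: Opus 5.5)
Your Step 2 is essentially the paper's global count (Lemma \ref{index}). Balancing gives angle sum at least $2\pi$ at each vertex, a per-face Gauss--Bonnet inequality bounds the corner angles from above, and surjectivity onto $S$ closes the chain, so all inequalities become equalities. Two repairs are needed. First, you assume (``I expect'') that every face image is a positively oriented polygon, which is part of what must be proved. The paper avoids this by using unsigned corner angles in $[0,\pi]$, with explicit conventions at zero-length edges, together with the inequality $\sum_i\theta^i_\sigma+\mathrm{Area}(\Phi_\sigma(\sigma))\le(n_\sigma-2)\pi$. That inequality holds for degenerate and immersed faces alike, strictly for immersed ones. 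Second, with these conventions the count needs no non-degeneracy, and it should come \emph{first}. Its output (angle sum exactly $2\pi$ at every vertex, distinct face images overlapping in zero area) is what makes the degenerate cases tractable.

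The genuine gap is Step 1, on which Step 3 (``consecutive angles are $<\pi$'') also rests. Balancing with positive weights only says the nonzero incident directions are not contained in an \emph{open} half-plane. It permits all of them to lie on one geodesic, producing corners of angle $0$ or $\pi$ with no vertex set collapsing and no whole face lying in a line. Your dichotomy omits this case, which is exactly what essential $3$-connectivity must exclude. The proposed mechanism also cannot work as a local argument. Take an edge $ab$ collapsed to a point $p$, with $a$ and $b$ each having two further neighbours on opposite sides of $p$ along one geodesic. This is balanced at $a$ and $b$, yet the four neighbours may be distinct, so no two vertices separate $\{a,b\}$ from the rest.

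The two-vertex cut only appears globally, and the paper builds it as follows:
\begin{itemize}
\item A collapsed set is a path whose incident edges lie on one geodesic. This uses the no-overlap property: the angles seen from the collapse point total at most $2\pi$, and each extreme vertex contributes at least $\pi$.
\item A corner of angle $\pi$ forces the whole star into a geodesic (Lemma \ref{singular}).
\item The vertices carrying singular or degenerate corners form a subgraph $H$ mapped into a geodesic $\lambda$.
\item Adding the faces lying in $H$, and splitting each mixed face by the chord $uw$ between the ends of its maximal run in $H$, gives a complex $\tilde{\mathcal H}$ mapped into $\lambda$. Its boundary vertices have angle sum $0$ inside it and its interior vertices have $2\pi$.
\item The Euler characteristic count then forces $\tilde{\mathcal H}$ to be a disk bounded by two edges, contradicting condition (3) of Theorem \ref{3conn}.
\end{itemize}
Some global argument of this kind is what your proposal is missing; without it, strict convexity in Step 3 is unsupported.
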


The key idea of the proof is an index theorem for discrete one forms in \cite{GGT}, which works for locally flat surfaces. Colin de Verdière \cite{CdV} introduced the degenerate angle structures on triangulations of a geodesic mapping to hyperbolic surfaces and proved a similar index theorem. We generalize this concept of the angle structures to cellular decompositions. Here, we identify every edge with the interval $[0, 1]$ and $\varphi_{ij}(t)$ for $t\in[0, 1]$ is a geodesic segment on $S$, where $\varphi_{ij}:G\to S$ is a geodesic mapping  . Let $l_{ij}$ be the length of the image of $\varphi_{ij}$.

Fix such a $\varphi$. There is a natural extension $\tilde \varphi:|\mathcal{C}|\to S$ of $\varphi$ from $G$ to $\mathcal{C}$ such that the restriction of $\tilde\varphi$ on each cell $\sigma$ lifts to $\Phi:\sigma\to \tilde S$, which is the development of the geodesic polygon $\tilde\varphi(\sigma)$ to the universal covering $\tilde S$. In the universal cover, $\Phi_\sigma(\sigma)$ is a potentially degenerate or immersed geodesic polygon. 

Recall that a corner of $\mathcal{C}$ is a pair $(v, \sigma)$, where $v$ is a vertex and $\sigma$ is a cell. A corner can also be indexed by two incident edges $ij$ and $ik$ with $i = v$ and $\sigma$ is a unique cell containing $i, j, k$. An angle structure of $\varphi$ is a map from the set of corners of a cellular decomposition $\mathcal{C}$ to $[0, \pi]$, such that 
\begin{itemize}
    \item if $l_{ij}$ and $l_{ik}$ form a corner and they are not zero, then $\theta_{jk}^i$ is a geometric angle determined by the tangent vectors of these two geodesic at $\varphi(i)$, (we call this \textit{nondegenerate corner}.)
    \item if $l_{ij} = 0$ but $l_{ik}\neq 0$, then the angle $\theta_{jk}^i = \pi/2$. 
    \item if $l_{ij} = l_{ik} = 0$ and the cell $\sigma$ containing $ijk$ is degenerate to a line, which means $\varphi$ sends all the edges of $\sigma$ to a geodesic, then the angles in this cell is arbitrarily assigned as long as the sum is $(n-2)\pi$ if $\sigma$ is an $n$-gon. 
    \item if $l_{ij} = l_{ik} = 0$ but the cell $\sigma$ containing $ijk$ is not degenerate to a line,  then $\theta_{jk}^i = \pi$. 
    
\end{itemize}

From these definitions, we have 
\begin{lemma}
\label{oneface}
    Given a geodesic mapping $\varphi:G\to (S, h)$ and an angle structure $\theta$ on $\mathcal{C}$, 
    $$(n-2)\pi \geq \sum_{i\in\sigma}\theta_\sigma^i - \int_{\Phi_\sigma(\sigma)}K d\tilde A\geq \sum_{i\in\sigma}\theta_\sigma^i - \int_{\tilde\varphi(\sigma)}K dA, $$
    where $dA$ and $d\tilde A$ are are form on $(M, g)$ and its universal covering respectively. 
\end{lemma}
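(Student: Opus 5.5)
We prove the two inequalities separately; the second is essentially formal, and the first is a Gauss--Bonnet estimate for the developed cell.

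\emph{Second inequality.} Here $K\equiv -1$, so $-\int K\,d\tilde A$ is just the hyperbolic area of $\Phi_\sigma(\sigma)$ in $\tilde S=\D$, counted with the multiplicity of the developing map. Since $\tilde\varphi$ restricted to $\sigma$ factors as the covering projection composed with $\Phi_\sigma$, we will compare the image in $\D$ with its projection to $S$. Interpreting $\int_{\Phi_\sigma(\sigma)}$ and $\int_{\tilde\varphi(\sigma)}$ as integrals over the images, the covering projection is a local isometry. It can only identify points, so it can only decrease the measure of the image. Hence $\mathrm{Area}(\tilde\varphi(\sigma))\le \mathrm{Area}(\Phi_\sigma(\sigma))$, and because $-K=1>0$ this gives the second inequality.

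\emph{First inequality.} We split into cases according to the angle structure on $\sigma$.
\begin{itemize}
\item If $\sigma$ degenerates to a line, the image has zero area and the angles sum to $(n-2)\pi$ by definition, so equality holds.
\item If every edge of $\sigma$ is nondegenerate and the developed polygon $\Phi_\sigma(\partial\sigma)$ is a simple closed geodesic polygon, Gauss--Bonnet in $\D$ gives $\sum \theta^i_\sigma + \mathrm{Area} = (n-2)\pi$. The inequality then follows with equality.
\end{itemize}

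The general case is where $\Phi_\sigma(\partial\sigma)$ is only a closed geodesic polygon in $\D$, possibly self-intersecting, with some zero-length edges. We handle zero-length edges first by collapsing them. A vertex between a zero edge and a nonzero edge gets angle $\pi/2$, and a vertex between two zero edges gets angle $\pi$. We will check that these assignments never exceed the interior-angle contribution (exterior angle $\ge 0$) of the collapsed polygon with fewer vertices. Each removed vertex adds at most $\pi$ to the left side and exactly $\pi$ to the right side $(n-2)\pi$. For the two vertices flanking a zero edge, the two assigned angles of $\pi/2$ total $\pi$, while the collapsed vertex's true angle is at most $\pi$, so the count still works.

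This reduces to an immersed or self-intersecting nondegenerate geodesic $m$-gon. For such a polygon we use the estimate
$$\sum_i \theta^i \le (m-2)\pi - \mathrm{Area}(\text{image}),$$
where each angle $\theta^i\in[0,\pi]$ is the unoriented angle between the tangent vectors. We would prove it by writing the total turning of the closed curve as $\sum(\pi-\theta^i)$, up to sign choices at reflex vertices, plus the integral of geodesic curvature, which is $0$. The rotation index is $\pm1$ because $\sigma$ is a disk mapped by the development. Gauss--Bonnet for the immersed disk, with the winding number of $\Phi_\sigma$ as multiplicity, gives $\sum(\pi - \text{signed interior angle}) = 2\pi + \int (\text{winding})\,dA$. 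Replacing signed angles by unoriented angles in $[0,\pi]$ only decreases the left side of the desired inequality. The weighted area is at least the area of the image wherever the winding is positive.

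The main obstacle is this last step. Orientation-reversing or folded regions can make the winding number zero or negative, so the multiplicity-weighted area need not dominate the area of the image. The first approach to try is a decomposition of the developed polygon by its self-intersection points into simple geodesic polygons, applying Gauss--Bonnet to each and summing. This uses that in curvature $-1$, every simple subpolygon has angle sum less than $(k-2)\pi$, and that unoriented angles at the crossing points add up favorably.
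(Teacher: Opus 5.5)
There is a genuine gap. The only substantive case of the first inequality, a development $\Phi_\sigma(\partial\sigma)$ that is not a simple polygon, is left unproved, and the route you set up for it does not work. Your claim that the rotation index is $\pm1$ ``because $\sigma$ is a disk mapped by the development'' is false. $\Phi_\sigma$ is only a continuous map of the disk into the contractible $\D$, not an immersion, so any closed geodesic polygon can occur. Examples are a crossed (bowtie) quadrilateral, with rotation index $0$, and a pentagram, with rotation index $2$. For the bowtie the two lobes have winding numbers $+1$ and $-1$. The identity $\sum_i\epsilon_i=2\pi r+\int w\,dA$ then only equates $\sum_i\epsilon_i$ with a difference of lobe areas, and gives no bound $\sum_i(\pi-\theta_i)\ge 2\pi+\mathrm{Area}$.

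You see the sign problem, but then only name the decomposition as something to try. That decomposition is the paper's idea: the enclosed region is a union of geodesic polygons, and the angles at self-crossings are not among the $\theta^i_\sigma$. Still, ``add up favorably'' is not yet an argument. Gauss--Bonnet on each face contributes its own $2\pi$, and these cancel only after using $V-E+F=1$ for the arrangement of the curve. Doing that bookkeeping gives Gauss--Bonnet for the outer boundary of the filled region $R$: $2\pi+\mathrm{Area}(R)=\sum_c(\alpha_c-\pi)$, summed over the corners $c$ of the unbounded face, with $\alpha_c$ the angle of that face at $c$. A crossing contributes $\alpha_c-\pi<0$, and an original vertex contributes at most $\pi-\theta_i$. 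This is exactly the missing estimate.

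A shorter way to close the gap, after your (correct) reduction to nonzero edge lengths, is to compare with the geodesic convex hull $P$ of the vertex images instead of decomposing. Every bounded component of the complement of the curve lies in $P$, since $\D\setminus P$ is connected, reaches $\partial\D$, and misses the curve. So the enclosed region the paper integrates over (or a geodesic cone from a vertex) lies in $P$. If $P$ has angles $\beta_1,\dots,\beta_h$ at its vertices, Gauss--Bonnet gives $\sum_j\beta_j+\mathrm{Area}(P)=(h-2)\pi$; take $h=2$ and $\beta_j=0$ if $P$ is a segment. For each hull vertex, pick one polygon vertex mapped to it. Both of its edges point into $P$, so its $\theta_i\le\beta_j$, while every other $\theta_i\le\pi$. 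Hence $\sum_i\theta_i+\mathrm{Area}(\Phi_\sigma(\sigma))\le\sum_j\beta_j+(n-h)\pi+\mathrm{Area}(P)=(n-2)\pi$. Equality forces the polygon to run exactly once around $\partial P$, which is the strictness for self-intersecting polygons that the paper uses after the index lemma.

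This also fixes two smaller slips. First, a cell degenerate to a line with all edges nonzero has geometric angles $0$ or $\pi$, and these need not sum to $(n-2)\pi$; a zigzag quadrilateral gives $0$. Second, an embedded polygon gives equality only when it is locally convex, because $\theta_i\in[0,\pi]$ is the unoriented angle. Your second inequality agrees with the paper. In the edge-collapsing step, the fact you actually need is $\theta_c\ge0$, not $\theta_c\le\pi$.
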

\begin{proof}
    The first inequality is a result of the the Gauss-Bonnet theorem. If $\Phi(\sigma)$ is a homeomorphism so that it develops to a geodesic polygon embedded in $\tilde S$, then this is exactly Gauss-Bonnet formula 
    $$(n-2)\pi - \sum_{i\in\sigma}\theta_\sigma^i = -\int Kd\tilde A$$
    It is straightforward to check that the same is true it some edges are sent to points, where $\theta_\sigma$ is determined above. The same is true by the definition of angle structures if $\sigma$ degenerates to a point.

\begin{figure}[H]
\centering		
\begin{overpic}[width=0.8\textwidth]{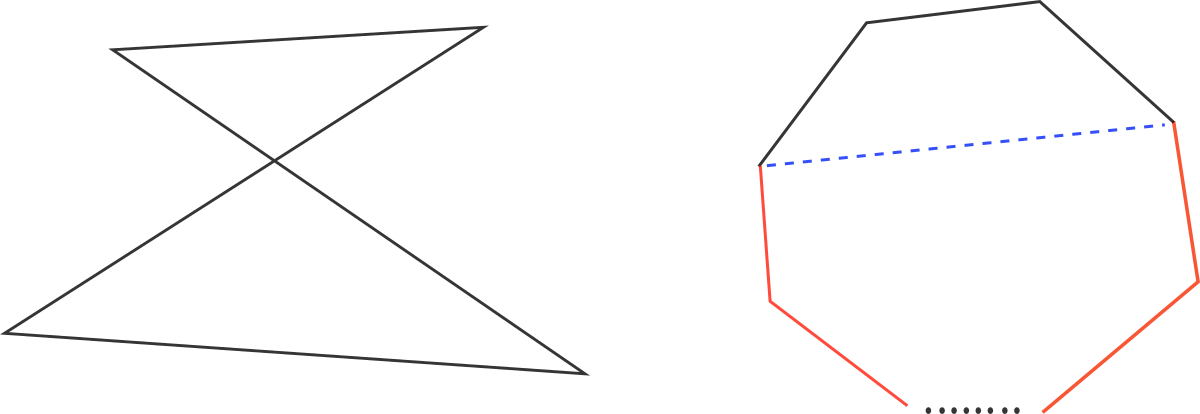}
    \put(60,22){$u$}
    \put(99,26){$w$}
    \put(60,9){$u_1$}
    \put(101,10){$w_1$}
\end{overpic}
\caption{Left: Polygon with intersections. Right: A face of 2nd type with red edges in $H$. }
\label{fig2}
\end{figure}

    However, the equality will not hold if the polygon is immersed. See Figure \ref{fig2}. In this case, the region for the integral is a union of geodesic polygons enclosed by the geodesic segments from the Jordan curve theorem. However, the angles in angles in the sum  $\sum_{i\in\sigma}\theta_\sigma^i $ does not include the angles created by the self-intersections of $\Phi(\sigma)$. Hence, we have 
    $$(n-2)\pi - \sum_{i\in\sigma}\theta_\sigma^i > -\int Kd\tilde A.$$

    The second inequality is immediate since when we project from $\Phi(\sigma)$ to $\tilde \varphi(\sigma)$, the image could overlap so the region for the integration could be smaller. Since $K\leq 0$, we have 
    $$-\int_{\Phi_\sigma(\sigma)}K d\tilde A\geq - \int_{\tilde\varphi(\sigma)}K dA,$$
    so the inequality follows. 
 \end{proof}
Notice that for triangles, it is either degenerate to a line or embedded, so the strict inequality in the first part never shows up. We prove the same results as Lemma 3.3 of \cite{LWZ}, which can be interpreted as an index theorem.
\begin{lemma}
\label{index}
    Let $\theta$ be an angle structure on $\mathcal{C}$. For any $i\in V$, we have 
    $$\sum_{i\in\sigma}\theta_\sigma^i = 2\pi. $$
    And $\Phi(\sigma)\cap\Phi(\sigma')$ has zero area for any $\sigma, \sigma'$ in the cellular decomposition, where $\Phi$ is a lift of $\varphi$ to the universal cover restricted to the corresponding faces.  
\end{lemma}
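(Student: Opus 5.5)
The plan is to squeeze a global double counting between two inequalities. One inequality comes from the balanced condition at each vertex, the other from Lemma \ref{oneface} on each cell, and Gauss--Bonnet on the closed surface $S$ makes them meet. This is the scheme of \cite{GGT} and of Lemma 3.3 in \cite{LWZ}.

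First, I will show that for every vertex $i$ the angle sum is at least $2\pi$:
$$\sum_{\sigma\ni i}\theta^i_\sigma\geq 2\pi.$$
If all edges at $i$ have positive length, the tangent vectors $\vec v_{ij}$ satisfy $\sum_j w_{ij}\vec v_{ij}=0$ with $w_{ij}>0$. Hence they do not all lie in an open half-plane of $T_{\varphi(i)}S$. The cyclic sequence of directions, taken in the rotation order given by the cellular structure, therefore winds around at least once, and the corner angles (each in $[0,\pi]$) sum to at least $2\pi$.

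If some edges at $i$ are degenerate, I handle them with the conventions of the angle structure. A corner with exactly one zero-length edge contributes $\pi/2$, so a zero-length edge sitting between two nondegenerate ones contributes $\pi/2+\pi/2=\pi$. A corner with two zero-length edges contributes $\pi$. Cells collapsed to a line carry their prescribed total. The balancing condition still applies to the nondegenerate directions. If all edges at $i$ vanish, I treat the whole component of zero-length edges as a single vertex, using that $\tilde G$ is $3$-connected. This case analysis is tedious but local.

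Second, I sum over all cells. By Lemma \ref{oneface},
$$\sum_\sigma\sum_{i\in\sigma}\theta^i_\sigma\le\sum_\sigma (n_\sigma-2)\pi+\sum_\sigma\int_{\Phi_\sigma(\sigma)}K\,d\tilde A\le 2\pi|E|-2\pi|F|+\sum_\sigma\int_{\Phi_\sigma(\sigma)}K\,d\tilde A .$$
Since $K\equiv-1$, the integral term equals $-\sum_\sigma \mathrm{Area}(\Phi_\sigma(\sigma))$. The key point is to show that
$$\sum_\sigma \mathrm{Area}(\Phi_\sigma(\sigma))\ge \mathrm{Area}(S)=-2\pi\chi(S).$$
For this I use that $\tilde\varphi:|\mathcal C|\to S$ is homotopic to the marking homeomorphism, so it has degree one and is surjective. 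Its image $\bigcup_\sigma\tilde\varphi(\sigma)$ therefore covers $S$. This gives
$$\sum_\sigma\sum_{i\in\sigma}\theta^i_\sigma\le 2\pi(|E|-|F|)-2\pi(|V|-|E|+|F|)+\ldots$$
More precisely, combining the bounds yields
$$\sum\theta\le 2\pi(|E|-|F|)+2\pi\chi(S)=2\pi|V|.$$
The first step gives the reverse inequality $\sum\theta\ge 2\pi|V|$.

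Third, equality holds throughout. This forces $\sum_{\sigma\ni i}\theta^i_\sigma=2\pi$ at every vertex. It also forces equality in the covering estimate, so the total area of the lifted cells equals the area of $S$ while their projections cover $S$. Consequently distinct $\Phi(\sigma),\Phi(\sigma')$, and their deck translates, overlap only in measure zero, which is the second claim. Equality in the first inequality of Lemma \ref{oneface} also rules out immersed, non-embedded cells, which will be used later.

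The main obstacle is the first step in the degenerate cases. There I must verify that the ad hoc angle conventions still give at least $2\pi$, using $3$-connectivity of $\tilde G$ to exclude configurations where a vertex and all its neighbors collapse onto one geodesic. I will try first to contract zero-length components and argue on the quotient star.
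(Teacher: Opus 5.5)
Your skeleton is the paper's: a lower bound $\sum_{\sigma\ni i}\theta^i_\sigma\ge 2\pi$ at each vertex from the balancing condition, the per-cell upper bound of Lemma~\ref{oneface}, and surjectivity of $\tilde\varphi$ with $\mathrm{Area}(S)=-2\pi\chi(S)$ to close the count at $2\pi|V|$. Equality is then forced everywhere. Working with $\int_{\Phi_\sigma(\sigma)}$ plus ``projection does not increase area'' is the same as the paper's use of the last term of Lemma~\ref{oneface}. Your intermediate line has a sign slip: it should read $+2\pi(|V|-|E|+|F|)$.

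Your reason for Step~1 in the nondegenerate case is not right. The directions need not wind around at all, because the rotation order can double back. For example, directions $0^\circ,170^\circ,190^\circ,170^\circ$ in rotation order are balanced with positive weights and have winding number $0$. The correct argument uses the closed path of shortest arcs through the directions, whose length is at most the corner-angle sum. Since the directions lie in no open semicircle, this path either covers the circle or traverses an arc of length at least $\pi$ out and back, so its length is at least $2\pi$. A run of zero-length edges between two nondegenerate ones contributes at least $\pi/2+\pi/2=\pi$, which dominates the geometric gap. This settles every vertex with at least one edge of positive length.

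The genuine gap is a vertex $i$ all of whose incident edges have length zero, and your proposed contraction does not close it. Contracting a zero-length component with $k$ vertices to one vertex gives, by summing the balancing conditions, a single inequality $\ge 2\pi$. The global count needs $2\pi k$ from those vertices. The missing $2\pi(k-1)$ would have to come from corners in collapsed cells, and there the angle structure only fixes the cell total $(n_\sigma-2)\pi$. For instance, if every cell around $i$ collapses to a point, every corner at $i$ may be assigned $0$, so the per-vertex bound fails for that angle structure.

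So no local case analysis can give $\sum_{\sigma\ni i}\theta^i_\sigma\ge 2\pi$ for \emph{every} angle structure; you need a global input. One option is to show first that no vertex has all its edges collapsed. The other is to restrict to an angle structure whose free angles give each vertex at least $2\pi$, and prove such a choice exists. Three-connectivity of $\tilde G$ does not by itself exclude such vertices. In the paper that exclusion (the component $G'$ is a path) is derived \emph{from} this lemma.

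The paper's own proof has the same hole: it simply asserts the bound holds ``even if there are degenerate edges incident to $i$''.
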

\begin{proof}
    From the balanced condition, it is clear that all the tangents vectors to geodesics from any $i$ can not be contained in any open half-plane. Then we have from the definition of angle structure that 
    $$\sum_{i\in\sigma}\theta_\sigma^i\geq 2\pi$$
    even if there are degenerate edges incident to $i$. Then 
    $$\sum_{i\in V}(2\pi-\sum_{i\in\sigma}\theta_\sigma^i)+\sum_{\sigma\in F}\int_{\tilde\varphi(\sigma)}KdA \leq \sum_{\sigma\in F}\int_{\tilde\varphi(\sigma)}KdA\leq \int_{S}KdA = 2\pi\chi(S)$$
    The second inequality holds since $\varphi$ is surjective, and it is a strict inequality if and only if there is a face overleap with a face over positive area.  

    On the other hand, we have 
    $$\sum_{i\in V}(2\pi-\sum_{i\in\sigma}\theta_\sigma^i)+\sum_{\sigma\in F}\int_{\tilde\varphi(\sigma)}KdA =  2\pi|V|-\sum_{\sigma\in F}(\sum_{i\in\sigma}\theta_\sigma^i - \int_{\tilde\varphi(\sigma)}KdA )$$
    By Lemma \ref{oneface},  
    $$\sum_{i\in V}(2\pi-\sum_{i\in\sigma}\theta_\sigma^i)+\sum_{\sigma\in F}\int_{\tilde\varphi(\sigma)}KdA \geq 2\pi|V| - \sum_{\sigma\in F}(n_\sigma - 2)\pi = 2\pi\chi(S)$$
    where we use $\chi(S) = V-F+E$. Hence, all the inequalities above are equalities, so we have the conclusion. 
\end{proof}

The same is true for $\Phi$ if we lift to the universal covering. Then we have an infinite planar graph in $\tilde S$ such that the intersection of any two cells has zero area, and the angle sum at each vertex equals $2\pi$. Moreover, if two edges of a cell intersects at their interior, then they are contained in a geodesic. Namely, immersed polygons Figure \ref{fig2} will not show up since the inequality is strict in this immersed polygon.

\subsection{Local Degeneration}
We will first discuss the scenario where one edge is mapped to a point by $\varphi$. Let $i$ be a degenerate vertex if some edge $l_{ij}$ has length zero. Two degenerate vertices $v_1, v_2$ are equivalent if there exists a path in $G$ starting from $v_2$ and ending at $v_2$ such that all the vertices on this path are mapped to the same point $y\in M$ by $\varphi$, which means that all the vertices in this path are degenerate. Let $G'$ be the subgraph generated by such an equivalence class of vertices, and notice that $G'$ is a connected graph. Let $\mathcal{C}'$ be the cell complex by gluing the two cell to $G'$ as how it is glued in $\mathcal C$ if all the vertices of this cell are in $G'$. 

\begin{lemma}
    The cell complex $\mathcal{C}'$ is contractible. 
\end{lemma}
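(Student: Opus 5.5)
We have to show that $\mathcal{C}'$, the subcomplex spanned by an equivalence class of degenerate vertices (all sent to one point $y$) together with the cells whose vertices all lie in $G'$, is contractible. The plan is to argue in the universal cover and combine Lemma \ref{index} with the balanced condition and essential $3$-connectivity. Two features drive the argument. First, every edge of $G'$ has length zero, so each lift of $G'$ sits over a single point $\tilde y\in\tilde S$. Second, every vertex of $G'$ has a neighbour outside $G'$ unless all of $G$ collapses.

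\textbf{Step 1 (lifting).} First we show that $G'$ carries no essential cycle, i.e. the inclusion $G'\hookrightarrow S$ (through the marking $f$) kills $\pi_1$. Suppose instead that some closed path in $G'$ is homotopically nontrivial. Its image under $\varphi$ is the constant loop at $y$. But $\varphi$ is homotopic to $f|_G$, and the lift $\Phi$ is equivariant. The lift of this path then joins $\tilde v$ to $\gamma\tilde v$ with $\gamma\neq 1$, while $\Phi(\tilde v)=\Phi(\gamma\tilde v)=\gamma\Phi(\tilde v)$. This says $\gamma$ fixes a point of $\D$, which contradicts the fact that $\Gamma$ acts freely. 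Hence each component $\tilde G'$ of the preimage of $G'$ in $\tilde S$ maps isomorphically onto $G'$, and the lifted complex $\tilde{\mathcal{C}}'$ is a finite subcomplex of the planar cell decomposition $\tilde{\mathcal{C}}$ of $\tilde S\cong\D$, isomorphic to $\mathcal{C}'$.

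\textbf{Step 2 (no holes).} A finite connected subcomplex of a cellular decomposition of the plane is contractible exactly when its complement in the plane is connected. Equivalently, every bounded complementary region is filled by cells of $\tilde{\mathcal{C}}'$. So suppose some bounded complementary region $R$ contains a cell or vertex not in $\tilde{\mathcal{C}}'$. There are two cases.
\begin{itemize}
\item $R$ contains vertices not in $\tilde G'$. Consider the finitely many vertices of $\tilde G$ inside $R$. The boundary of $R$ is mapped to the single point $\tilde y$, and every interior vertex is balanced. By a maximum principle, every vertex in $R$ also maps to $\tilde y$. The maximum principle is applied to a convex function such as $d(\tilde y,\cdot)$: in the hyperbolic plane, a balanced vertex cannot be a strict local maximum of distance from $\tilde y$ unless all its neighbours are at the same point. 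Such a vertex is then joined to $\tilde G'$ by a path of vertices all mapped to $\tilde y$, so it is equivalent to the vertices of $G'$ and lies in $G'$, a contradiction.
\item $R$ contains no vertices. Then $R$ is a single $2$-cell all of whose vertices lie in $\tilde G'$. By the definition of $\mathcal{C}'$ this cell belongs to $\tilde{\mathcal{C}}'$, again a contradiction.
\end{itemize}

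\textbf{Step 3 (role of $3$-connectivity).} Essential $3$-connectivity enters only to rule out the degenerate case where $G'$ is all of $G$. In that case the whole graph maps to one point, which contradicts Step 1 applied to any essential cycle. So $\tilde G'$ is a proper finite subgraph and Step 2 applies.

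The main obstacle is the maximum principle in Step 2. The weights are not symmetric, so there is no energy minimization to fall back on, and the argument must be made purely locally from the balanced tangent vectors. The plan is to take a vertex in $R$ at maximal distance from $\tilde y$. Its outgoing tangent vectors all point into the closed half-plane facing $\tilde y$, because hyperbolic balls are convex. Balance then forces all of these tangent vectors to be tangent to the boundary circle of the ball. Strict convexity of hyperbolic distance spheres makes this impossible unless every neighbour is at distance $0$ from that vertex. Iterating this argument from the farthest vertex inward collapses every vertex in $R$ to $\tilde y$.
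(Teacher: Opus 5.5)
Your proof is correct, and it is more complete than what the paper writes. The paper cites Lemma 3.5(c) of \cite{LWZ} and p.~206 of \cite{CdV}, and adds only the one-line remark that $\tilde\varphi|_{\mathcal{C}'}$ is a homotopy equivalence whose image is a point. The sound content of that remark is that $\tilde\varphi$ is homotopic to the marking, so the inclusion of $\mathcal{C}'$ is trivial on $\pi_1$ and $\mathcal{C}'$ lifts isomorphically into $\D$. That is exactly your Step 1.

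By itself, Step 1 does not rule out an annulus-shaped lift with non-collapsed vertices in a bounded complementary region. For a non-balanced geodesic mapping this really happens: collapse the link of a vertex to a point and leave the vertex elsewhere. So the balance condition must enter, and in the paper it is hidden in the citations. Your Step 2 makes this explicit through a strong maximum principle for $d(\tilde y,\cdot)$. That principle uses only local balance with positive weights, so it works for the directed, non-symmetric weights of this paper. Colin de Verdi\`ere's energy-minimization setting, by contrast, has symmetric weights.

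A few points to tidy:
\begin{itemize}
\item State the maximum principle as follows: the set of vertices of $\bar R$ at maximal distance $r$ is closed under taking neighbours, and every vertex of $R$ is joined inside $R$ to $\partial R\subset\tilde G'$, where the distance is $0$; hence $r=0$. This is cleaner than ``iterating inward.''
\item In the vertex-free case, $R$ could a priori contain chords. Any such chord joins two vertices lying over $\tilde y$, so it has zero length by uniqueness of geodesics in $\D$ and lies in $\tilde G'$. Hence $R$ really is a single open $2$-cell.
\item Say explicitly that $G'$ consists of zero-length edges. The paper's phrase ``subgraph generated by'' is ambiguous, and Step 1 needs this reading.
\item Drop the claim that every vertex of $G'$ has a neighbour outside $G'$, and drop the claim that essential $3$-connectivity is used. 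Neither is justified at this stage, and neither is needed: $G'\neq G$ already follows from Step 1, because $G$ carries essential cycles.
\end{itemize}
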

\begin{proof}
    This is part (c) of Lemma 3.5 in \cite{LWZ} or Lemma in page 206 of \cite{CdV}. It is a simple fact since $\tilde\varphi$ restricted to $\mathcal{C}'$ is a homotopy equivalent and $\tilde\varphi(\mathcal{C}')$ is a point. 
\end{proof}

\begin{lemma}
    The graph $G'$ is a path. 
\end{lemma}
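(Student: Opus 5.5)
We need to show that the subgraph $G'$ spanned by an equivalence class of degenerate vertices, all sent to the same point $y$, is a path. The plan is to combine the contractibility of $\mathcal{C}'$ from the previous lemma with the balancing condition and essential 3-connectivity. We work in the universal cover, on the lift $\tilde G$, so that 3-connectivity is available. Let $\tilde G'$ be a lift of $G'$; by contractibility of $\mathcal{C}'$, $\tilde G'$ is a finite connected graph whose lifted cells $\tilde{\mathcal C}'$ form a contractible complex.

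\textbf{Step 1: boundary vertices and outside directions.} Each vertex $v$ of $\tilde G'$ is balanced at $\tilde\varphi(v)=\tilde y$. Edges inside $\tilde G'$ have zero length and contribute nothing to the balance sum. The remaining edges at $v$ leave $\tilde G'$ to vertices mapped elsewhere. Each such edge has a well-defined unit tangent at $\tilde y$, and the balancing condition forces these tangents not to lie in an open half-plane. Such outside edges exist at some vertex: otherwise the whole of $\tilde G$ would collapse to $\tilde y$ by connectivity, contradicting that $\varphi$ is homotopic to the marking.

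Consider the boundary walk of the contractible complex $\tilde{\mathcal C}'$, meaning the cyclic sequence of corners where outside edges attach. Corners of $\tilde G'$-cells carry angle $\pi$ or satisfy the degenerate-cell rule. Lemma~\ref{index} says the angle sum at each vertex is exactly $2\pi$ and that distinct cells overlap in zero area. Hence the outside directions, read cyclically around the whole cluster, must wind exactly once around $\tilde y$, so the cluster behaves like a single vertex of total angle $2\pi$. Summing the angle-structure contributions over the cluster, using Euler's formula for the contractible $\tilde{\mathcal C}'$ and the $(n-2)\pi$ rule for cells degenerate to a point, should show the following. Every vertex of $\tilde G'$ at which outside edges attach carries outside directions spanning a closed half-plane. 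Moreover, the outside directions at different vertices are separated along a single line through $\tilde y$.

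\textbf{Step 2: 3-connectivity forces a path.} Suppose $\tilde G'$ is not a path. Then it has a vertex of degree $\ge 3$ in $\tilde G'$, or it contains a cycle. A cycle would bound 2-cells in $\tilde{\mathcal C}'$, since the complex is contractible and planar in the cover. We then use the half-plane structure from Step 1 to locate two vertices $a,b$ whose removal separates $\tilde G$. One choice is the two extremal vertices of $\tilde G'$ along the line of outside directions. Another is a branch vertex together with one neighbour, which separates a branch of $\tilde G'$ whose outside edges all point into one half-plane. In either case we find a 2-cut, contradicting essential 3-connectivity; a version of Mohar's characterization \cite{Mohar} could be used to phrase this in terms of $G$ on $S$. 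Finally, balancing at a leaf-free internal configuration rules out degree-$\ge 3$ vertices. So $\tilde G'$, and hence $G'$ (which projects isomorphically because its lift is contractible), is a path.

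\textbf{Main obstacle.} The hard part is Step 2, extracting an actual 2-vertex cut from the geometric constraints. In particular we must show that the outside neighbours of a branch of $\tilde G'$ can only reach the rest of $\tilde G$ through the chosen two vertices. I would first try the planar-separation argument of Gortler--Gotsman--Thurston \cite{GGT}: take the line $\ell$ through $\tilde y$ spanned by the degenerate directions, and use the zero-overlap conclusion of Lemma~\ref{index} to show that the cells on either side of $\ell$ cannot interleave.
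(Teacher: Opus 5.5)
\textbf{There is a genuine gap, and it sits exactly where you expected it: Step 2 is not carried out, and the 2-cut mechanism it relies on is the wrong tool.}

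\textbf{Step 1.} The two conclusions you reach are stated only as what the summation ``should show'': that at each vertex the outside directions span a closed half-plane, and that directions at different vertices are separated by a single line through $\tilde y$. These are essentially as strong as the lemma itself, and no derivation is given.

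\textbf{Step 2.} This step depends on finding two vertices whose removal separates $\tilde G$, but nothing in the geometry produces such a pair. The outside neighbours of a branch of $\tilde G'$ are generally joined to the rest of $\tilde G$ through vertices not in $\tilde G'$. So deleting a branch vertex and one neighbour, or the two ``extremal'' vertices along a line, need not disconnect anything. For example, suppose $\tilde G'$ were a star $K_{1,3}$, or a single polygon collapsed to $\tilde y$. You have no candidate pair that you can show separates $\tilde G$. What rules these configurations out is an angular obstruction, not a connectivity one. The closing sentence about ``balancing at a leaf-free internal configuration'' is likewise not an argument.

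\textbf{The missing idea.} You already have the right ingredients in Step 1, namely the balancing condition and the zero-overlap/angle-sum conclusion of Lemma~\ref{index}. The missing step is the counting that combines them, which is what the paper does.
\begin{itemize}
\item Call a vertex of $G'$ \emph{extreme} if its outside (nondegenerate) edges form a single cyclically consecutive block. Every leaf of $G'$ is of this type.
\item At an extreme vertex, balancing forbids the outside tangent directions from lying in an open half-plane. So the nondegenerate corner angles between consecutive outside edges sum to at least $\pi$. Minimum degree $3$ guarantees there are outside edges at all.
\item All cells carrying nondegenerate corners at vertices of $G'$ meet at the single point $\tilde y$ with pairwise zero-area overlap. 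Hence all these angles together sum to at most $2\pi$.
\item Therefore $G'$ has at most two extreme vertices.
\end{itemize}
Contractibility of $\mathcal{C}'$ then finishes. Suppose a block of $G'$ is 2-connected. Then it bounds a disk of $\mathcal{C}'$-cells whose boundary has at least three vertices. If it is a leaf block, all its boundary vertices except the cut vertex are extreme. With any other leaf this gives at least three extreme vertices, so such blocks cannot occur. Thus $G'$ is a tree whose leaves are extreme, so it has at most two leaves and is a path. No 2-vertex cut or Mohar-type separation argument is needed for this lemma. That kind of argument is what the paper uses later, in the proof of Theorem~\ref{main}, to find a disk bounded by two paths.
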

\begin{proof}
    A vertex $i$ of $G'$ is called extreme if the cyclic indices $i_1, i_2, i_3, \cdots, i_k, i_{k+1} = i_1$ of its neighbors is a union of two subsets of consecutive indices such that one subset contains edges in $G'$ and another subset contains edges not in $G'$. Intuitively, this means that the link of $v$ intersects with $\mathcal{C}'$ in a connected set. 

    We claim that there are at most two extreme vertices in $G'$. Indeed, by the balanced condition, the sum angles at non-degenerate corners centered at an extreme vertex is at least $\pi$, otherwise all the tangent vectors of non-degenerate edges from $v$ are contained in an open half-plane. 

    Consider all the sum of angles centered at vertices in $G'$ on non-degenerate corners. (Notice that this is not the sum of all angles at the boundary vertices of $G'$.) Since there can not be any overlapping of cells under $\tilde\varphi$. The sum of these angles is at most $2\pi$. Namely, the situation of overlaps of angles in Figure $1$ in \cite{CdV} will not happen.

    Hence, if there are more than two extreme vertices, then the angle sum will be at least $3\pi$, which is a contradiction. 

    Notice that $G$ and $G'$ can not have degree one vertex by the 3-connectivity. Then it is straightforward to see that the $G'$ has to be a path since it is contractible. 
\end{proof}
This also implies that all the angle at non-degenerate corners centered at vertices other than the extreme vertices must be zero. This is also the result in Lemma 3.5 of \cite{LWZ}. For example, the angle $\angle i_1j_2i_6$ in Figure $2$ in \cite{CdV} must be zero. 
\begin{corollary}
\label{linedeg}
    The edges adjacent to vertices in $G'$ are mapped to a geodesic. 
\end{corollary}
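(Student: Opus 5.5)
We derive the corollary from the preceding lemma ($G'$ is a path) together with the angle-sum information in its proof. Write $G'$ as a path $v_1 v_2\cdots v_m$ with $m\ge 2$, all mapped to the same point $y=\varphi(v_1)$. The goal is to show that every non-degenerate edge leaving some $v_k$ has its initial tangent vector at $y$ in $\{\pm u\}$ for a single unit vector $u\in T_yS$. A geodesic mapping then sends all these edges into the one geodesic through $y$ with direction $u$.

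\textbf{Step 1: interior vertices.} For an interior vertex $v_k$, $1<k<m$, the link of $v_k$ meets $\mathcal{C}'$ in the two edges $v_kv_{k-1}$ and $v_kv_{k+1}$. These split the cyclic order of the remaining neighbours into two consecutive blocks $A_k$ and $B_k$, lying on the two sides of the path. By the remark after the preceding lemma, every non-degenerate corner at $v_k$ has angle zero. Hence all tangent vectors of the edges in $A_k$ coincide, and likewise for $B_k$. The balance condition \eqref{eq:balanced-geodesic-mapping} at $v_k$ has positive weights, and the degenerate edges contribute zero vectors. So it reads $a\,u_{A}+b\,u_{B}=0$ with $a,b>0$, which forces $u_B=-u_A$. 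A block may be empty only if $v_k$ has degree two in $G$, which is impossible by $3$-connectivity of the lift. The non-degenerate edges at $v_k$ therefore lie along a line $\ell_k$ through $y$.

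\textbf{Step 2: gluing the lines.} Consecutive interior vertices $v_k,v_{k+1}$ share the two cells containing the edge $v_kv_{k+1}$. Such a cell contains a non-degenerate edge from $v_k$ and one from $v_{k+1}$. Its boundary near the degenerate edge $v_kv_{k+1}$ is a geodesic polygon whose corners at $v_k$ and $v_{k+1}$ both collapse to the point $y$. In the universal cover, the angle at $y$ between these two non-degenerate sides must be $0$ or $\pi$. Otherwise, the angle sum over non-degenerate corners of the vertices of $G'$ exceeds the $2\pi$ bound from the preceding lemma, or the zero-overlap conclusion of Lemma~\ref{index} fails. Hence $\ell_k=\ell_{k+1}$, and by induction all interior vertices share one line $\ell$.

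\textbf{Step 3: extreme vertices.} At $v_1$, whose link meets $\mathcal{C}'$ in the single edge $v_1v_2$, the non-degenerate edges form one consecutive block. Its two outermost members lie in the two cells containing $v_1v_2$, so by the Step 2 argument they point along $\ell$. The proof of the preceding lemma shows that the total of these non-degenerate angles is at most $2\pi$ minus the $\pi$ already used at $v_m$. Together with the lower bound $\pi$ from balance, the block at $v_1$ sweeps exactly a half-plane bounded by $\ell$. Balance at $v_1$ then requires the positive combination of these vectors to vanish, and the only way to cancel within a closed half-plane is for all vectors to lie on its boundary line $\ell$. The same holds at $v_m$. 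The case $m=2$ is handled the same way, with both vertices extreme.

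\textbf{Main obstacle.} I expect the hardest step to be the rigour in Step 2 and Step 3: turning ``corner angles collapse at a degenerate edge'' into the statement that adjacent directions agree, using only the angle-structure conventions (in particular the $\pi/2$ rule for corners with one degenerate side) and the area and angle equalities from Lemma~\ref{index}. My first attempt would be to argue entirely in the universal cover with $\Phi$, tallying the angles around $y$ from all cells incident to $G'$. The equality case of Lemma~\ref{index} should leave no room for any direction off $\ell$.
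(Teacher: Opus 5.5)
Your Steps 1 and 3 are the paper's argument. The paper treats the corollary as immediate from the bookkeeping in the proof that $G'$ is a path: the two extreme vertices carry exactly $\pi$ of nondegenerate angle, every other nondegenerate corner is $0$, and balance finishes.

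The gap is in Step 2, which the paper also passes over silently. For triangulations this step is automatic: a triangle containing a collapsed edge has its two other edges equal to the unique geodesic from $y$ to the third vertex. For $n$-gons with $n\ge 4$ it needs an argument, and the reason you give does not work. The angle at $y$ of a cell containing $v_kv_{k+1}$ is not a nondegenerate corner at any vertex of $G'$, since the angle structure assigns $\pi/2$ to both of its corners at $v_k$ and $v_{k+1}$. So this angle never enters the $2\pi$ bound from the path lemma. Your alternative, that ``the zero-overlap conclusion fails'', is asserted rather than shown.

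The $\pi/2$ rule you mention under ``Main obstacle'' is what closes the gap. Let $\sigma$ be an $n$-gon whose only zero-length edge is $v_kv_{k+1}$, and let $\gamma$ be the interior angle at $y$ of the collapsed $(n-1)$-gon. Use Gauss--Bonnet on that $(n-1)$-gon, together with $\theta^{v_k}_\sigma=\theta^{v_{k+1}}_\sigma=\pi/2$ and the fact that every other $\theta^i_\sigma\in[0,\pi]$ is at most the corresponding interior angle. This gives
\[
\sum_{i\in\sigma}\theta^i_\sigma-\int_{\Phi_\sigma(\sigma)}K\,d\tilde A\;\le\;(n-2)\pi-\gamma .
\]
Lemma~\ref{index} forces equality in Lemma~\ref{oneface} face by face, so $\gamma=0$. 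Thus the two nondegenerate sides leave $y$ in the same direction; if the collapsed polygon is degenerate they are collinear anyway. This yields $\ell_k=\ell_{k+1}$, in fact $u_{A_k}=u_{A_{k+1}}$, and it also justifies the outermost edges at $v_1$ and $v_m$ in Step 3.

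An equivalent repair extends the non-overlap count to \emph{all} sectors at $y$, including those of cells containing a collapsed edge. These sectors have disjoint interiors, and $v_1$ and $v_m$ already use $\pi$ each, so every collapsed angle must vanish.

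A smaller point in Step 1: an empty block does not force degree two. It makes $v_k$ extreme, and for an extreme vertex the zero-angle remark you invoke does not apply, so appealing to balance there is circular. What excludes this case is that $v_1$ and $v_m$ are already the two extreme vertices the lemma permits.
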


Notice that for non-degenerate corners, the image of the two edges of the corner can still be in one geodesic. In this case, we define that a corner of $\varphi$ is singular if it is not degenerate(two edges are not zero) but the angle at this corner is $0$ or $\pi$. Then Lemma 3.6 in \cite{LWZ} still holds in this case. 
\begin{lemma}
\label{singular}
    If $(i, \sigma)$ is a singular corner and $\theta_\sigma^i = \pi$, then all the edges incident to $i$ is mapped to a geodesic by $\varphi$. 
\end{lemma}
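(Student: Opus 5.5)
The approach is the one of Lemma 3.6 in \cite{LWZ}: combine the angle-sum identity of Lemma \ref{index} with the balanced condition at $i$, and then propagate collinearity from corner to corner around the link of $i$. Let $(i,\sigma)$ be a singular corner formed by the nondegenerate edges $ij$ and $ik$, with $\theta^i_\sigma=\pi$. Then the unit tangent vectors $\vec v_{ij}$ and $\vec v_{ik}$ at $x=\varphi(i)$ are opposite, and together they span a geodesic line $\ell$ through $x$. This line divides $T_xS$ into two closed half-planes. The corner $\sigma$ accounts for one of them, and the remaining corners at $i$ sweep out the other.

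\textbf{Step 1: the remaining angles sum to exactly $\pi$.} By Lemma \ref{index}, $\sum_{i\in\sigma'}\theta^i_{\sigma'}=2\pi$. Since $\theta^i_\sigma=\pi$, the other corners at $i$ contribute exactly $\pi$. Going around $i$ in cyclic order from $ik$ back to $ij$, the tangent directions of the nondegenerate edges turn by these corner angles. By Lemma \ref{index}, the images of distinct cells overlap in zero area, so these corners cannot backtrack or overlap. Hence every nondegenerate edge $e$ between $ik$ and $ij$ (on the side opposite $\sigma$) has a tangent direction lying in the closed half-plane $H$ opposite to $\sigma$.

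\textbf{Step 2: balance forces all edges onto $\ell$.} The balanced condition gives $\sum w_{ie}\vec v_{ie}=0$ with all $w_{ie}>0$. Pair this with a unit normal $n$ to $\ell$ pointing into $H$. The vectors $\vec v_{ij}$ and $\vec v_{ik}$ contribute $0$, and every other nondegenerate edge contributes $w_{ie}\langle\vec v_{ie},n\rangle\ge 0$. The total must be zero, so each of these terms vanishes, and every nondegenerate edge at $i$ is tangent to $\ell$. Edges of length zero are dealt with by Corollary \ref{linedeg}: the vertices of the degenerate path $G'$ through $i$ have all their incident edges mapped into one geodesic, and that geodesic must be $\ell$ because it contains the nondegenerate edges at those vertices. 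Finally, a geodesic segment that starts at $x$ tangent to $\ell$ is contained in $\ell$. So all edges incident to $i$ are mapped into the geodesic $\ell$.

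\textbf{Main obstacle.} The delicate point is the half-plane claim in Step 1 when some corners at $i$ are themselves degenerate or singular. Degenerate corners are assigned the conventional values $\pi/2$ or $\pi$ rather than measured angles, so it is not automatic that the tangent directions rotate monotonically through exactly $\pi$. I would handle this by collapsing each degenerate path $G'$ to a single vertex and using Corollary \ref{linedeg} to replace its contribution with genuine geometric angles. After this reduction, the cyclic sequence of nondegenerate directions at $x$ has consecutive turning angles summing to $\pi$. The no-overlap part of Lemma \ref{index} then rules out any winding beyond $\pi$, which yields the half-plane claim.
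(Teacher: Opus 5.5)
Your proposal is correct and follows the paper's argument: the angle sum of $2\pi$ from Lemma \ref{index} puts all tangent vectors at $i$ in a closed half-plane bounded by the line through $\vec v_{ij}$ and $\vec v_{ik}$, and the balanced condition with positive weights then forces them all onto that line (the paper phrases this as a contradiction with the vectors spanning $T_xS$). Your worry about degenerate corners is easily dispatched: a degenerate corner at $i$ means $i$ has a zero-length edge, and then Corollary \ref{linedeg} gives the conclusion directly, so the collapsing step is unnecessary.
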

\begin{proof}
    If not, then all the tangent vectors at $i$ are contained in a closed half-space but they span the tangent space. This contradicts the balanced condition. 
\end{proof}

\subsection{Proof of Theorem \ref{main}}
The proof of Theorem \ref{main} is based on one characterization of essentially 3-connected graph in Proposition 3.1 of \cite{Mohar}. We rephrase here as following:
\begin{theorem}
\label{3conn}
    Let $G$ be the graph of a cellular decomposition of $S$ such that $\chi(S)\leq 0$. Then $G$ is essentially $3$-connected if and only if the following properties hold:
    \begin{enumerate}
        \item $G$ has no vertices of degree less than $3$,
        \item $G$ has no faces of size less than $3$,
        \item $G$ does not contain vertices $x, y$ and two internally disjoint paths $P_1, P_2$ from $x$ to $y$ such that the closed walk $P_1P_2^{-1}$ bounds a disk $D$ on $S$ and the only vertices on $P_1\cup P_2$ that have a neighbor out of $D$ are $x$ and $y$.
    \end{enumerate}
\end{theorem}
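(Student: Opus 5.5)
This characterization is Proposition 3.1 of \cite{Mohar}, which the paper only rephrases. I would therefore cite it, and sketch both directions by working in the universal cover $\tilde S$. Since $\chi(S)\le 0$, the surface is aspherical and $\tilde S$ is a plane. The lift $\tilde{\mathcal C}$ is then a locally finite cellular decomposition of the plane, with one-skeleton $\tilde G$.

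\textbf{Necessity.} Suppose $\tilde G$ is $3$-connected. A vertex of degree at most $2$ in $G$ lifts to a vertex of degree at most $2$ in $\tilde G$, and its neighbours form a separating set of size at most $2$. So property 1 holds.

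For property 2, a face of size $1$ or $2$ is bounded by a loop or a digon. A loop bounds a disk and is null-homotopic, so it lifts to a loop in $\tilde G$, which cannot occur in a simple $3$-connected graph. A digon lifts to a pair of parallel edges. One could try to rule these out by a degree count, but the cleaner route is to read essential $3$-connectivity, as Mohar does, as including simplicity of $\tilde G$.

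For property 3, suppose $x,y,P_1,P_2,D$ exist with $D$ a disk. Lift $D$ homeomorphically to $\tilde D\subset\tilde S$. The lifts $\tilde x,\tilde y$ separate the vertices strictly inside $\tilde D$, or on $\tilde P_1\cup\tilde P_2$, from the rest of $\tilde G$. This gives a $2$-separation unless the disk contains no vertex other than $x$ and $y$. In that degenerate case $P_1$ and $P_2$ are single edges bounding a digon face, which property 2 excludes.

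\textbf{Sufficiency.} This is the main step. Suppose $\tilde G$ has a separating set $\{a,b\}$ of size at most $2$. The key claim is that, in a plane cell decomposition, a minimal separator $\{a,b\}$ yields a closed curve through $a$ and $b$ passing only through faces. This curve can be taken to consist of two face-diagonals, so it crosses the graph only at $a$ and $b$. Within the two faces it crosses, it can be replaced by boundary paths $\tilde P_1,\tilde P_2$ from $a$ to $b$.

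Since $\tilde S$ is a plane, $\tilde P_1\tilde P_2^{-1}$ bounds a disk $\tilde D$. Local finiteness lets us choose $\tilde D$ to be the component containing finitely many vertices, which exists since $\tilde G$ is infinite and one side must be bounded. Then:
\begin{itemize}
\item If the separator has size $1$, property 1 or property 2 fails.
\item If $a=b$ is a cut vertex, a face-bounding loop argument gives a contradiction with property 2.
\item Otherwise, the only vertices of $\tilde P_1\cup\tilde P_2$ with neighbours outside $\tilde D$ are $a$ and $b$.
\end{itemize}

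It remains to project to $S$. I expect the main obstacle here to be showing that the covering projection is injective on $\tilde D$, so that its image is an embedded disk $D$ with $P_1,P_2$ internally disjoint. I would first try minimality: choose $\{a,b\}$ and $\tilde D$ with $\tilde D$ containing the fewest cells. If a deck transformation $\gamma$ has $\gamma\tilde D\cap\tilde D\neq\emptyset$, then, since deck transformations are fixed-point free and the boundaries are mapped to one another, either $\gamma\tilde D\subset\tilde D$ or the two disks cross along the boundary paths. Each possibility should produce a smaller configuration, or contradict freeness of the action via the Brouwer fixed point theorem. The resulting projection is exactly the configuration forbidden by property 3.
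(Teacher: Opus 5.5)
Citing Proposition 3.1 of \cite{Mohar} is exactly what the paper does. It supplies no argument of its own: the proof environment following the statement is the proof of Theorem~\ref{main}. Your necessity sketch is essentially sound. Reading essential $3$-connectivity as including simplicity of $\tilde G$ is not merely cleaner but forced. Doubling an edge of an essentially $3$-connected triangulation creates a digon face while $\tilde G$ stays $3$-connected in the vertex-separator sense, so no degree count can recover property 2 under the literal definition.

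The sufficiency sketch, however, contains a step that fails. Both of your bullets on separators of size $1$ assert that a cut vertex of $\tilde G$ forces a violation of property 1 or 2, and this is false. Take an essentially $3$-connected triangulation and a triangular face $apq$. Inside it, draw a copy of $K_4$ on $a,u,v,c$ whose outer triangle $auv$ meets the boundary of the face only at $a$. The decomposition is still cellular: the old face becomes a face of size $6$ whose boundary walk visits $a$ twice. All degrees are at least $3$ and all faces have size at least $3$, yet every lift of $a$ is a cut vertex of $\tilde G$. A cut vertex produces a face whose boundary walk revisits it, not a loop face, so the ``face-bounding loop argument'' has nothing to act on. What fails in this example is property 3, with $x=a$, $y=u$, $P_1=au$, $P_2=avu$ and $D$ the triangle $auv$.

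So the cut-vertex case must also be routed through property 3. Take a leaf block $B$ of $\tilde G$ on the finite side of the cut vertex, attached to the rest at a single vertex $x$. Property 1 rules out $B$ being a single edge, so its outer boundary is a cycle through $x$, which you split at any second vertex $y$.

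After that you face the same two points that remain unproved in your $2$-separator case. First, you must choose the configuration minimal (fewest enclosed vertices) so that $\tilde P_1,\tilde P_2$ are internally disjoint; a common internal vertex $c$ splits $\tilde D$ into smaller configurations at $\{a,c\}$ and $\{c,b\}$. Second, you must show the covering projection is injective on $\tilde D$. This is the real content of Mohar's proposition, and in your write-up it is only a plan. Beyond the Brouwer argument excluding $\gamma\tilde D\subset\tilde D$, a useful handle for the crossing case is this: $\partial(\gamma\tilde D)$ can pass from the exterior of $\tilde D$ into $\tilde D$ only at $a$ or $b$, because every other vertex of $\tilde P_1\cup\tilde P_2$ has all its edges in $\tilde D$.
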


\begin{proof}
    We claim that if $\varphi$ is not an embedding, then there exists a singular corner in the angles structure defined in the previous section. Indeed, if one edge is mapped to a point by $\varphi$, then by Corollay there is a singular corner at the vertex not in $G'$ but adjacent to $G'$. If all the edges have positive length under $\varphi$, and there is no singular corner, then all the $\tilde\varphi$ from $\mathcal{C}$ to $S$ is a local homeomorphism around every vertex, face, and edge by Lemma. Hence, it is a homeomorphism since it is a degree-one map. Then by the balance condition, the geodesic polygons are convex. 

    Define a subgraph $H$ of $G$ as the following: a vertex $v\in V(H)$ if there is a face $\sigma$ of $\mathcal{C}$ such that the corner $(v, \sigma)$ is singular or degenerate, or there is an edge $vu$ from $v$ to a vertex $u\in V(H)$ such that there exists a singular or degenerate corner $vuw$ at $u$. Let $H$ be a subgraph generated by these vertices. Without loss of generality, we assume that $H$ is a connected subgraph of $G$. Notice that $\varphi$ maps $H$ to a geodesic $\lambda$. Moreover, if there is an edge with length zero, then the graph $G'$ defined in the previous section is a subgraph of $H$.

    Given $H$, there are three types of faces of $\mathcal{C}$. A face of the first type intersects with $H$ at no edges or just one edge. A face of the second type intersects with $H$ at more than one edge, but contains some edge not in $H$. A face of the third type has all the edges contained in $H$. Then we can glue all the faces of the third type to $H$ to form a complex $\mathcal{H}$. 

    Given a face of the second type, for convenience, we locally lift it to the universal cover as a polygon $\sigma$. Notice that by Lemma \ref{index}, edges of $\sigma$ can not intersect unless they intersects at some vertex. It can contain some degenerate edges (length zero), degenerate corners, or singular corners. 
    
     Then let $v\in \sigma\cap H$, let $u, w$ be two vertices of $\sigma$ such that the path $u, u_1, \cdots, v, \cdots,w_1, w$ is the longest path consisting of a consecutive vertices of $\sigma\cap H$. Namely, the vertices next to $u$ or $w$ are not in $H$. See Figure \ref{fig2}.

     This means that $u$ and $w$ are boundary vertices of $\mathcal{H}$. Notice that by the definition of the face of the second type, the length of the path $u, u_1, \cdots, v, \cdots,w_1, w$ is at least two, (which means it contains at least two edges.) Two important properties of $u$ and $w$ are 
     \begin{itemize}
         \item The tangent vectors from $u$ span the tangent plane, otherwise all the neighbors of $u$ are in $\lambda$, contradicting to the assumption. The same statement holds for $w$. 
         \item All the edges incident to $u$ and $w$ have positive length, otherwise it is contradicting to Lemma \ref{linedeg}. 
     \end{itemize}

     We will subdivide $\sigma$ by connecting $u$ and $w$ in $\sigma$. Then we extend $\varphi:G\to (S, h)$ to $\phi: \tilde G\to (S, h)$ where $\tilde G$ is the union of $G$ with the edge $uw$ such that $\phi(uw)$ lifts to the geodesic segment in the lift $\sigma$ in the universal covering.We also extend the cell complex from $\mathcal{C}$ to $\tilde{\mathcal{C}}$. Notice that $\phi$ is always a balanced geodesic map. Indeed, it only differs from $\varphi$ on $uw$. Then the tangent vector of the edge $uw$ can be expressed as a linear combination of other vectors, so it is easy to see that we can find weights on $uw$ and modify other weights locally on edges from $u$ and $w$ so that $\phi$ is balanced with respect to the weight. 
     
     Let $\Phi$ be a lift of $\varphi$ to the universal covering. Notice that $\Phi(u)\neq \Phi(w)$, otherwise $\phi(uw)$ is a point, then by Lemma \ref{linedeg}, all the edges incident to $u$ and $w$ are in $\lambda$, contradicting to the assumption. Notice that $\phi(u)$ can be the same point as $\phi(w)$, by $\phi(uw)$ is not degenerate. 

     Consider the angle at the corner $uww_1$ centered at $w$. Since the lengths of $uw$ and $ww_1$ are not zero, and $u,w,w_1$ are in a geodesic, the angle at this corner can only be zero or $\pi$. However, the case of $\pi$ can not occur, otherwise by Lemma \ref{singular}, all the edges incident to $w$ are in $\lambda$. Hence, the angle at the corner $uww_1$ is zero. Similarly, the angle at the corner $wuu_1$ is also zero. 

     After understanding the scenario of one face of the second type, we can conduce the subdivision to all the faces of the second type to produce a new graph $\tilde H$ containing $H$ as a subgraph, and a cell complex $\mathcal{H}$ containing $\mathcal{H}$ as a subcomplex. The cell complex $\tilde{\mathcal{H}}$ contains the $2$-cells in $\tilde{\mathcal{H}}$ and the new  $2$-cells after subdivision such that the images of these two cells are in $\lambda$. Therefore, there is no face of the second type in $\tilde{\mathcal{H}}$, and $\tilde{\mathcal{H}}$ is a union of $2$-cells. 

     In summary, we construct a new cell complex $\tilde{\mathcal{C}}$ and its subcomplex $\tilde{\mathcal{H}}$ such that $\phi(\tilde{\mathcal{H}})$ is in a geodesic $\lambda$. Since $\tilde{\mathcal{H}}$ is not the whole surface, let $v$ be a boundary vertex of $\tilde{\mathcal{H}}$. Then all the corners in $\tilde{\mathcal{H}}$ centered at $w$ are singular, and the angles at these corners can not be $\pi$. Otherwise, by Lemma \ref{singular}, $w$ can not be a boundary vertex of $\tilde{\mathcal{H}}$. Hence, the angle sum at $w$ in  $\tilde{\mathcal{H}}$ is zero. This is true for all boundary vertices of  $\tilde{\mathcal{H}}$.

     Therefore, we sum the angles in $\tilde{\mathcal{H}}$ as the following:
     $$\sum_{\sigma\in\tilde{\mathcal{H}}}(n_\sigma-2)\pi = \sum_{i\in\tilde H}\sum_{i\in\sigma}\theta_\sigma^i = 2\pi|V_I|,$$
     where $V_I$ is the interior vertices of $\tilde{\mathcal{H}}$. On the other hand, since $G$ is essentially $3$-connected, there is digon or monogon, then 
     $$\sum_{\sigma\in\tilde{\mathcal{H}}}(n_\sigma-2)\pi = \pi(f_3+2f_4+3f_5 + 4f_6 + \cdots),$$
     where $f_i$ is the number of $i$-gons in $\tilde{\mathcal{H}}$. Hence, we have 
    $$(f_3+2f_4+3f_5 + 4f_6 + \cdots) = 2|V_I|.$$
    Meanwhile, every interior edge in $E_I$ is used twice and every boundary edge $E_B$ is used once for all the faces, so we have 
    $$3f_3+4f_4+5_5 + 6f_6 + \cdots = 2|E_I|+ |E_B|.$$
    Hence, we have 
    $$2|F|+2|V_I| = 2|E_I| + |E_B|. $$
    However, the Euler characteristics of $\tilde{\mathcal{H}}$, from $|V_B| = |E_B|$, is 
    $$\chi(\tilde{\mathcal{H}}) = |V_I|+|V_B| - |E_I| - |E_B| + |F|  = \frac{1}{2}|E_B|. $$
    Notice that $\chi(\tilde{\mathcal{H}})$ is either non-positive or one. If it is non-positive, then $\tilde{\mathcal{H}}$ is empty, contradicting to the assumption. If it is one, then $\tilde{\mathcal{H}}$ is a disk bounded by two edges and two vertices. This contradicts to the property (3) of the essential 3-connectivity in Theorem \ref{3conn}, where $x, y$ are these two vertices. 
\end{proof}

\end{document}